\newcommand{\diag}{\mathrm{diag}}
\newcommand{\vect}{\mathrm{vect}}
\newcommand{\VEC}{\mbox{vec}}
\newtheorem{thm}{Theorem}
\newtheorem{lmm}{Lemma}
\newtheorem{crly}{Corollary}
\newtheorem{algm}{Algorithm}
\newtheorem{dfn}{Definition}
\newcommand{\In}{{\rm{in}}}
\newcommand{\Out}{{\rm{out}}}
\newcommand{\tr}{{\rm{trace}}}
\newcommand{\bal}{{\rm{bal}}}
\newcommand{\pre}{{\rm{pre}}}
\newcommand{\post}{{\rm{post}}}
\newcommand{\per}{{\rm{per}}}
\newcommand{\eper}{{\rm{eper}}}
\newcommand{\new}{{\rm{new}}}
\newcommand*\circled[1]{\tikz[baseline=(char.base)]{
            \node[shape=circle,draw,inner sep=2pt] (char) {#1};}}
\newcommand*\triangled[1]{\tikz[baseline=(char.base)]{
            \node[regular polygon, regular polygon sides=3,draw,inner sep=1pt] (char) {#1};}}
\begin{document}

\title{Balanced Truncation Model Reduction of Nonstationary Systems Interconnected over Arbitrary Graphs}

\author{Dany Abou Jaoude and Mazen Farhood\\
\footnotesize Kevin T. Crofton Department of Aerospace and Ocean Engineering, Virginia Tech, Blacksburg, VA 24061, U.S.A.\\
\footnotesize Email addresses: danyabj@vt.edu, farhood@vt.edu.}

\date{}

\maketitle

\begin{abstract}                          % Abstract of not more than 200 words.
This paper deals with the balanced truncation model reduction of discrete-time, linear time-varying, heterogeneous subsystems interconnected over finite arbitrary directed graphs. The information transfer between the subsystems is subject to a communication latency of one time-step. The presented method guarantees the preservation of the interconnection structure and further allows for its simplification. In addition to truncating temporal states associated with the subsystems, the method allows for the order reduction of spatial states associated with the interconnections between the subsystems and even the removal of whole interconnections. Upper bounds on the $\ell_2$-induced norm of the resulting error system are derived. The proposed method is illustrated through an example.
\end{abstract}

{\bf Keywords:} Model reduction, Interconnected systems, Linear systems, Time-varying systems, Directed graphs.

\section{Introduction}
Various biological and engineering systems consist of multiple interacting agents. Mathematically describing such systems can lead to models with a very large number of states, especially in the case of ``large'' models of the agents and complicated interconnection structures. Thus arises the need for model order reduction to simplify the control analysis and synthesis problems. Interconnected systems can be treated as one global system; and standard model reduction tools, like balanced truncation (BT) and coprime factors reduction (CFR), can then be applied to these systems. However, such an approach is not always desirable as it does not guarantee the preservation of the interconnection structure of the system.

Various works have addressed the problem of structure-preserving BT and CFR for interconnected systems. See, for example, \begin{NoHyper}{\cite{li2005,murray2009,Altaie2015,DAJMFCDC15,aboujaoudefarhood2016CFR}}\end{NoHyper}. The methods in these references, like the one in \begin{NoHyper}{\cite{beck1996}}\end{NoHyper}, are based on the existence of block-diagonally structured solutions to linear matrix inequalities (LMIs). BT applies to stable systems, guarantees the stability of the reduced order system, and comes with a guaranteed upper bound on the norm of the error system. On the other hand, CFR applies to stabilizable and detectable systems, and guarantees the stabilizability and detectability of the reduced order system. However, the bound from CFR is not in terms of the norm of the error between the full and reduced order systems, but rather, it is in terms of the norm of the error between their corresponding coprime factorizations. The aforementioned works can be classified based on the modeling of the interconnections between the subsystems. Namely, \begin{NoHyper}{\cite{murray2009}}\end{NoHyper} account for the interconnection structure using a transfer function matrix, whereas, \begin{NoHyper}{\cite{Altaie2015,DAJMFCDC15,aboujaoudefarhood2016CFR}}\end{NoHyper} model the interconnections between the subsystems as states, which we refer to as spatial states. The latter methods allow for the order reduction of the spatial states in addition to the standard states associated with the subsystems, which we refer to as the temporal states. That is, in addition to guaranteeing the preservation of the interconnection structure, these methods  further allow for its simplification. In particular, \begin{NoHyper}{\cite{Altaie2015}}\end{NoHyper} deal with homogeneous, linear time-invariant (LTI) or linear parameter-varying (LPV) subsystems, interconnected over a grid. Model reduction is only applied to one subsystem, and then, all the temporal states, all the forward spatial states, and all the backward spatial states are truncated in a uniform way, respectively. As for  \begin{NoHyper}{\cite{DAJMFCDC15,aboujaoudefarhood2016CFR}}\end{NoHyper}, they deal with heterogeneous, linear time-varying (LTV) subsystems, interconnected over arbitrary directed graphs. The methods therein allow for individually truncating each of the temporal and spatial states, and even permit the removal of a whole interconnection if it is deemed negligible. Due to the time-varying nature of the subsystems, these methods usually involve solving infinite sequences of LMIs. Also, if truncation is performed at infinitely many time-steps, the resulting error bound might not be finite. \begin{NoHyper}{\cite{DAJMFCDC15}}\end{NoHyper} show that in the special case of time-periodic subsystems, the sequences of LMIs can be restricted to the first time-period, and the error bound is guaranteed to be finite.

The current work extends the results of \begin{NoHyper}{\cite{DAJMFCDC15}}\end{NoHyper} from time-periodic subsystems to eventually time-periodic subsystems, i.e., subsystems which become time-periodic after some initial finite time-horizon. We also derive a tighter expression for the error bound for general LTV subsystems, which applies when the entries of the balanced gramians corresponding to the truncated states form monotonic sequences in time. We also provide an illustrative example.

The paper is organized as follows. In Section~\ref{preliminariessection}, we define the notation and summarize the adopted state-space framework along with the relevant analysis results. Then, in Section~\ref{balancedtruncationmodelreductionsection}, we present the BT method. We derive the error bounds in Section~\ref{errorboundssection} and treat the special class of eventually time-periodic subsystems in Section~\ref{ETPsection}. In Section~\ref{examplesection}, we apply the method to an example. We conclude the paper with Section~\ref{conclusionsection}.

\section{Preliminaries}\label{preliminariessection}
\subsection{Notation}
\begin{figure}[t]
\centering
\includegraphics[scale=0.25]{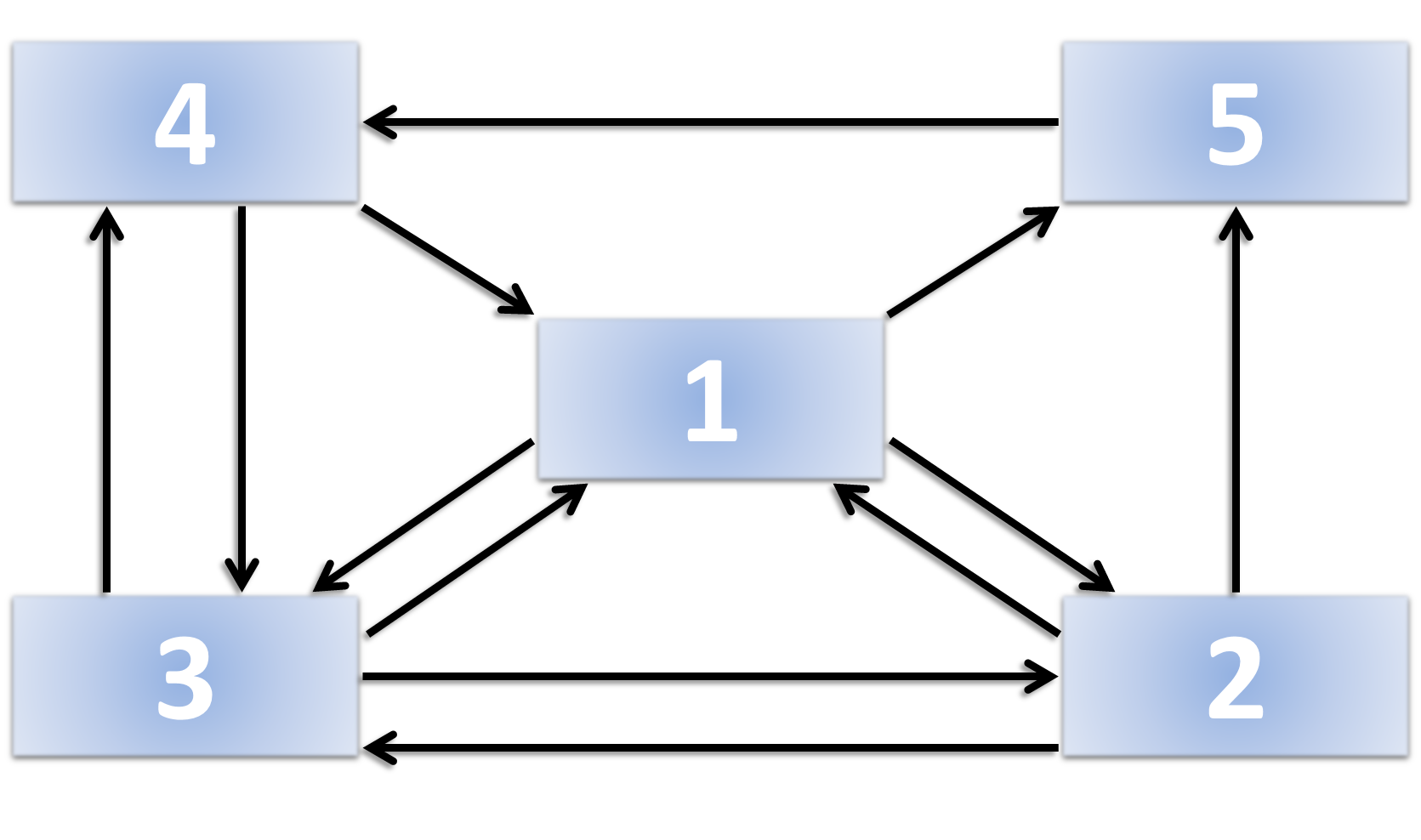}
\vskip -6mm
\caption{Example of a directed graph.}
\vskip -3mm
\label{fig:digraphillustration}
\end{figure}
The sets of nonnegative integers, real numbers, and $n \times n$ symmetric matrices are denoted by $\mathbb{N}_0$, $\mathbb{R}$, and $\mathbb{S}^n$, respectively. Let $S$ be an ordered subset of $\mathbb{N}_0$. We denote by $(v_i)_{i \in S}$ the vector-valued sequence associated with $S$ and by $\VEC(v_i)_{i \in S}$ the vertical concatenation of the elements of $(v_i)_{i \in S}$. The elements in $(v_i)_{i \in S}$ are ordered conformably with the elements in $S$. As an example, let $S=\{1,2,4\}$. Then, $(v_i)_{i \in S}= (v_1,v_2,v_4)$ and $\VEC(v_i)_{i \in S}=\begin{bmatrix}v_1^T&v_2^T&v_4^T\end{bmatrix}^T$. Similarly, we denote by $(M_i)_{i \in S}$ the matrix-valued sequence associated with $S$ and by $\diag(M_i)_{i \in S}$ the block-diagonal augmentation of the elements of $(M_i)_{i \in S}$. In our example, $(M_i)_{i \in S}=(M_1,M_2,M_4)$ and $\diag(M_i)_{i \in S}=\diag(M_1,M_2,M_4)=\begin{bmatrix}M_1&0&0\\0&M_2&0\\0&0&M_4\end{bmatrix}$. $0_{i\times j}$ denotes an $i\times j$ zero matrix and $I_i$ denotes an $i\times i$ identity matrix.

$\mathcal{G}(V,E)$ refers to a directed graph with set of vertices $V$ and set of directed edges $E$. We assume throughout that the directed graph under consideration is finite. That is, both $V$ and $E$ are finite sets. We denote by $N$ the finite number of vertices, and choose $V=\{1,\ldots,N\}$.  The ordered pair $(i, j)$ is in $E$ if there exists a directed edge from $i \in V$ to $j \in V$.
For each $k \in V$, we define the set of vertices with an outgoing edge to $k$ as $E_{\In}^{(k)}=\{i\in V \,|\, (i,k) \in E\}$, and denote its cardinality by $m(k)$. Similarly, we define the set of vertices with an incoming edge from $k$ as $E_{\Out}^{(k)}=\{j\in V \,|\, (k,j) \in E\}$, and denote its cardinality by $p(k)$. We order the elements in these sets in an increasing fashion. For example, consider the directed graph in Figure~\ref{fig:digraphillustration}. For $k=1$, we define $E_{\In}^{(1)}=\{2,3,4\}$, $m(1)=3$, $E_{\Out}^{(1)}=\{2,3,5\}$, $p(1)=3$, and so on.

$X \prec 0$ (resp. $\succ 0$) means that the symmetric matrix $X$ is negative definite (resp. positive definite). Let $n$ be an integer sequence such that $n: (t,k)\in\mathbb{N}_0\times V$ $\rightarrow n^{(k)}(t)\in\mathbb{N}_0$. We define $\ell(\{\mathbb{R}^{n^{(k)}(t)}\})$ as the vector space of mappings $w: (t,k) \in \mathbb{N}_0 \times V \rightarrow w^{(k)}(t) \in \mathbb{R}^{n^{(k)}(t)}$. The Hilbert space $\ell_2(\{\mathbb{R}^{n^{(k)}(t)}\})$ is the subspace of $\ell(\{\mathbb{R}^{n^{(k)}(t)}\})$ consisting of mappings $w$ with a finite $\ell_2$-norm $\|w\|_2=\left(\sum_{(t,k)}w^{(k)}(t)^T w^{(k)}(t)\right)^\frac{1}{2}$. We subsequently use the abbreviated symbols $\ell$ and $\ell_2$.

\subsection{State-Space Representation}
Next, we give the state-space equations for a distributed system $G$, formed by discrete-time, heterogeneous, LTV subsystems interconnected over finite arbitrary directed graphs and subjected to a communication latency. We represent the interconnection structure of $G$ using a directed graph $\mathcal{G}(V,E)$, where each subsystem $G^{(k)}$ corresponds to a vertex $k\in V$, and the interconnections between the subsystems are described by the directed edges. Each subsystem $G^{(k)}$ has a discrete-time LTV model, with states $x^{(k)}(t)$, inputs $u^{(k)}(t)$, and outputs $y^{(k)}(t)$. We refer to the states associated with the subsystems as temporal states or  vertex/node states. The interconnections between the subsystems are also modeled using states, which we refer to as spatial states or  interconnection/edge states. Namely, we associate a state $x^{(ij)}(t)$ with each edge $(i,j)\in E$. Due to the communication latency, the information sent from $G^{(i)}$ at time $t$ reaches $G^{(j)}$ at the next time-step, i.e., $t+1$. For each subsystem $G^{(k)}$, we define vectors $x_{\In}^{(k)}(t)=\VEC(x^{(ik)}(t))_{i\in E_{\In}^{(k)}}$ and $x_{\Out}^{(k)}(t)=\VEC(x^{(kj)}(t))_{j\in E_{\Out}^{(k)}}$, which are partitioned into $m(k)$ and $p(k)$ vector-valued channels, respectively. These vectors represent the total information received and sent by $G^{(k)}$ at time $t$. When all subsystems are considered, and since the interconnection input to a subsystem is an output to another subsystem, both $x^{(k)}_{\Out}$ and $x^{(k)}_{\In}$ contain all spatial states $x^{(ij)}$. We assume zero initial conditions for the temporal and the spatial states. Then, for all $(t,k) \in \mathbb{N}_0\times V$, we have  $x^{(k)}(0)=0$,  $x_{\In}^{(k)}(0)=0$,
\begin{align}
\begin{bmatrix}
x^{(k)}(t+1)\\
x_{\Out}^{(k)}(t+1)
\end{bmatrix}&=A^{(k)}(t) \begin{bmatrix}
x^{(k)}(t)\\
x_{\In}^{(k)}(t)
\end{bmatrix} +B^{(k)}(t)\, u^{(k)}(t),\nonumber \\
y^{(k)}(t)&=C^{(k)}(t)\begin{bmatrix}
x^{(k)}(t)\\
x_{\In}^{(k)}(t)
\end{bmatrix} +D^{(k)}(t)\, u^{(k)}(t). \label{eq:distributedsystemequations}
\end{align}
The matrix-valued sequences of state-space matrices, e.g., $A^{(k)}(t)$, are known a priori and assumed to be uniformly bounded. The dimensions of signals $x^{(k)}(t)$, $u^{(k)}(t)$, $y^{(k)}(t)$, and $x^{(ij)}(t)$ can vary with $t$, and $k$ or $(i,j)$, and are denoted by $n^{(k)}(t), n_u^{(k)}(t)$, $n_y^{(k)}(t)$, and $n^{(ij)}(t)$, respectively, for all $(t,k)\in \mathbb{N}_0\times V$ and $(i,j) \in E$. We denote the realization of system $G$ by the quadruple $\left(A^{(k)}(t), B^{(k)}(t), C^{(k)}(t), D^{(k)}(t)\right)$.

For each $(t,k)$, the state-space matrices are naturally partitioned conformably with the partitioning of $\begin{bmatrix}x^{(k)}(t+1)^T& x_{\Out}^{(k)}(t+1)^T\end{bmatrix}^T$  and $\begin{bmatrix}x^{(k)}(t)^T& x_{\In}^{(k)}(t)^T\end{bmatrix}^T$. For example, consider the distributed system in Figure~\ref{fig:digraphillustration}. The state-space matrices of subsystem $G^{(1)}$ are partitioned as follows:
\[A^{(1)}(t){=}\!\begin{bmatrix}
\rule{0mm}{4.5mm}A_{00}^{(1)}(t) & \!\!A_{02}^{(1)}(t)& \!\!A_{03}^{(1)}(t) & \!\!A_{04}^{(1)}(t)\\
\rule{0mm}{4.5mm}A_{20}^{(1)}(t) & \!\!A_{22}^{(1)}(t)& \!\!A_{23}^{(1)}(t) & \!\!A_{24}^{(1)}(t)\\
\rule{0mm}{4.5mm}A_{30}^{(1)}(t) & \!\!A_{32}^{(1)}(t)& \!\!A_{33}^{(1)}(t) & \!\!A_{34}^{(1)}(t)\\
\rule{0mm}{4.5mm}A_{50}^{(1)}(t) & \!\!A_{52}^{(1)}(t)& \!\!A_{53}^{(1)}(t) & \!\!A_{54}^{(1)}(t)
\end{bmatrix}\!\!{,} \,\,
B^{(1)}(t){=}\!\begin{bmatrix}
\rule{0mm}{4.5mm}B_0^{(1)}(t)\\
\rule{0mm}{4.5mm}B_2^{(1)}(t)\\
\rule{0mm}{4.5mm}B_3^{(1)}(t)\\
\rule{0mm}{4.5mm}B_5^{(1)}(t)
\end{bmatrix}\!\!{,}\,\,C^{(1)}(t){=}\!\begin{bmatrix}
C_0^{(1)}(t) & \!\!C_2^{(1)}(t) & \!\!C_3^{(1)}(t) & \!\!C_4^{(1)}(t)
\end{bmatrix}\!\!{,} \]
where $A_{00}^{(1)}(t)$ is an $n^{(1)}(t+1) \times n^{(1)}(t)$ matrix, $A_{20}^{(1)}(t)$ is an $n^{(12)}(t+1) \times n^{(1)}(t)$ matrix, $B_0^{(1)}(t)$ is an $n^{(1)}(t+1) \times n_u^{(1)}(t)$ matrix, $C_0^{(1)}(t)$ is an $n_y^{(1)}(t) \times n^{(1)}(t)$ matrix,  $C_2^{(1)}(t)$ is an $n_y^{(1)}(t) \times n^{(21)}(t)$ matrix, etc.

\subsection{Analysis Results}
We now summarize the relevant analysis results of \begin{NoHyper}{\cite{farhood2015}}\end{NoHyper}. We consider a system $G$ with realization $(A^{(k)}(t), B^{(k)}(t), C^{(k)}(t), D^{(k)}(t))$. Since the state-space equations (\ref{eq:distributedsystemequations}) have zero initial conditions and the state-space matrices are defined for $t \in \mathbb{N}_0$, we can equivalently assume that the state-space matrices are zeros for $t<0$. Then, from \begin{NoHyper}{\cite{dullerud2004}}\end{NoHyper}, it can be shown that system $G$ is well-posed, i.e., given inputs in $\ell$, the state-space equations admit unique solutions in $\ell$, and further define a linear causal mapping on $\ell$. A well-posed system $G$ is said to be stable if, given inputs in $\ell_2$, the state-space equations admit unique solutions in $\ell_2$, and further define a linear causal mapping on $\ell_2$. Next, we give a Lyapunov-based test to check if system $G$ is stable. The result constitutes the basis of the BT method.
\vskip 3mm
\begin{lmm}\label{strongstabilitylemma}
System $G$ is stable if there exist $\beta>0$ and uniformly bounded, positive definite, matrix-valued functions $X^{(k)}(t)\in \mathbb{S}^{n^{(k)}(t)}$ and $X^{(ij)}(t)\in \mathbb{S}^{n^{(ij)}(t)}$, for all $(t,k)\in \mathbb{N}_0\times V$ and $(i,j) \in E$, such that $X^{(k)}(t) \succ \beta I$, $X^{(ij)}(t)\succ \beta I$, and
\begin{equation}\label{eq:strongstabilitycondition}
A^{(k)}(t)^T
\begin{bmatrix}
X^{(k)}(t+1)& 0\\
0 & X_{\Out}^{(k)}(t+1)
\end{bmatrix}A^{(k)}(t)
-\begin{bmatrix}
X^{(k)}(t)&0\\
0& X_{\In}^{(k)}(t)
\end{bmatrix} \prec -\beta I,
\end{equation}
\[
X_{\In}^{(k)}(t) =\diag(X^{(ik)}(t))_{i\in E_{\In}^{(k)}}, \quad \quad \mbox{and} \quad \quad
X_{\Out}^{(k)}(t)=\diag(X^{(kj)}(t))_{j\in E_{\Out}^{(k)}}.\]
\end{lmm}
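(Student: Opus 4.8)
The plan is to aggregate the subsystem certificates~(\ref{eq:strongstabilitycondition}) into a single dissipation inequality for the whole interconnected system, using a Lyapunov function built from the block-diagonal collection of all the $X^{(k)}(t)$ and $X^{(ij)}(t)$. Fix an input $u\in\ell_2\subset\ell$ and let $(x^{(k)},x_{\In}^{(k)},x_{\Out}^{(k)},y^{(k)})$ be the unique solution of~(\ref{eq:distributedsystemequations}) in $\ell$ guaranteed by well-posedness. Set
\[
V(t)=\sum_{k\in V}x^{(k)}(t)^T X^{(k)}(t)\,x^{(k)}(t)+\sum_{(i,j)\in E}x^{(ij)}(t)^T X^{(ij)}(t)\,x^{(ij)}(t)\ \ge\ 0,
\]
so that $V(0)=0$ by the zero initial conditions. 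The key combinatorial fact is that each spatial state $x^{(ij)}$ occurs exactly once among the channels of $x_{\Out}^{(i)}$ and exactly once among the channels of $x_{\In}^{(j)}$, whence
\[
\sum_{(i,j)\in E}x^{(ij)}(t)^T X^{(ij)}(t)\,x^{(ij)}(t)=\sum_{k\in V}x_{\In}^{(k)}(t)^T X_{\In}^{(k)}(t)\,x_{\In}^{(k)}(t)=\sum_{k\in V}x_{\Out}^{(k)}(t)^T X_{\Out}^{(k)}(t)\,x_{\Out}^{(k)}(t).
\]
Writing $z^{(k)}(t)=\begin{bmatrix}x^{(k)}(t)\\ x_{\In}^{(k)}(t)\end{bmatrix}$ and $w^{(k)}(t)=\begin{bmatrix}x^{(k)}(t+1)\\ x_{\Out}^{(k)}(t+1)\end{bmatrix}$, this lets me express
\[
V(t)=\sum_{k\in V}z^{(k)}(t)^T\diag\!\big(X^{(k)}(t),X_{\In}^{(k)}(t)\big)z^{(k)}(t),\qquad V(t+1)=\sum_{k\in V}w^{(k)}(t)^T\diag\!\big(X^{(k)}(t+1),X_{\Out}^{(k)}(t+1)\big)w^{(k)}(t).
\]

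Next I would substitute the subsystem dynamics $w^{(k)}(t)=A^{(k)}(t)z^{(k)}(t)+B^{(k)}(t)u^{(k)}(t)$ into $V(t+1)$ and subtract $V(t)$. For $u\equiv 0$, applying~(\ref{eq:strongstabilitycondition}) to each summand and summing over $k$ gives $V(t+1)-V(t)\le-\beta\sum_{k\in V}\|z^{(k)}(t)\|^2=-\beta\big(\sum_{k\in V}\|x^{(k)}(t)\|^2+\sum_{(i,j)\in E}\|x^{(ij)}(t)\|^2\big)$, using once more that the $x_{\In}^{(k)}$ collectively list every spatial state once. For general $u$, expanding the quadratic form produces, in addition, a cross term $2\sum_k z^{(k)}(t)^T A^{(k)}(t)^T\diag(X^{(k)}(t+1),X_{\Out}^{(k)}(t+1))B^{(k)}(t)u^{(k)}(t)$ and a $u$-quadratic term $\sum_k u^{(k)}(t)^T B^{(k)}(t)^T\diag(X^{(k)}(t+1),X_{\Out}^{(k)}(t+1))B^{(k)}(t)u^{(k)}(t)$. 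Bounding the cross term by Young's inequality (so as to absorb $\tfrac{\beta}{2}\sum_k\|z^{(k)}(t)\|^2$) and using the uniform bounds on $A^{(k)},B^{(k)}$ and on the $X^{(k)},X^{(ij)}$, I obtain a constant $c>0$ depending only on those bounds and on $\beta$ such that
\[
V(t+1)-V(t)\le-\tfrac{\beta}{2}\Big(\sum_{k\in V}\|x^{(k)}(t)\|^2+\sum_{(i,j)\in E}\|x^{(ij)}(t)\|^2\Big)+c\sum_{k\in V}\|u^{(k)}(t)\|^2.
\]

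Summing this over $t=0,\dots,T$, using $V(0)=0$ and $V(T+1)\ge 0$, and letting $T\to\infty$ yields $\sum_{(t,k)}\|x^{(k)}(t)\|^2+\sum_t\sum_{(i,j)\in E}\|x^{(ij)}(t)\|^2\le\tfrac{2c}{\beta}\|u\|_2^2<\infty$; in particular, all temporal and spatial states lie in $\ell_2$. Feeding this into the output equation $y^{(k)}(t)=C^{(k)}(t)z^{(k)}(t)+D^{(k)}(t)u^{(k)}(t)$ and using the uniform bounds on $C^{(k)},D^{(k)}$ gives $y\in\ell_2$ with $\|y\|_2$ bounded by a fixed multiple of $\|u\|_2$. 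Since~(\ref{eq:distributedsystemequations}) already defines a unique, linear, causal map on $\ell$, its restriction to inputs in $\ell_2$ is a unique, linear, causal map into $\ell_2$, which is exactly the asserted stability of $G$.

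I expect the only genuinely delicate point to be the bookkeeping in the first paragraph: recognizing that the block-diagonal weight $\diag(X^{(k)}(t),X_{\In}^{(k)}(t))$ acting ``on the right'' at time $t$ and the weight $\diag(X^{(k)}(t+1),X_{\Out}^{(k)}(t+1))$ acting ``on the left'' at time $t+1$ reference the \emph{same} family of spatial weights $X^{(ij)}$, so that the local Lyapunov certificates glue into one consistent global $V$ and the sum of the local strict inequalities collapses without any leftover cross-coupling between subsystems. The input/cross-term estimate and the telescoping over time are then routine once the uniform boundedness hypotheses are invoked.
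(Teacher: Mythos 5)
The paper does not prove this lemma itself (it is imported from the cited analysis reference), but your argument is the standard dissipation proof that such results rest on, and it is correct: the one nontrivial ingredient --- that the edge terms can be re-summed either as $\sum_{k}x_{\In}^{(k)}(t)^{T}X_{\In}^{(k)}(t)x_{\In}^{(k)}(t)$ or as $\sum_{k}x_{\Out}^{(k)}(t)^{T}X_{\Out}^{(k)}(t)x_{\Out}^{(k)}(t)$, because every spatial state appears exactly once as an output channel of its source and once as an input channel of its sink --- is handled correctly, and the Young-inequality absorption of the input cross term together with the telescoping sum then delivers $x\in\ell_2$ and hence $y\in\ell_2$ exactly as needed. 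I see no gap.
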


The solutions to (\ref{eq:strongstabilitycondition}) can be classified into temporal terms  $X^{(k)}(t)$ and spatial terms $X^{(ij)}(t)$. Due to the time-varying nature of the subsystems, there is an infinite sequence of LMIs associated with each subsystem. Moreover, the LMI sequence associated with a given subsystem is coupled with the LMI sequences of the other subsystems through the spatial terms. The ``$\beta I$'' terms in (\ref{eq:strongstabilitycondition})  are small quantities added to ensure that the matrix sequences on the left-hand side do not converge to singular matrices as $t$ approaches infinity. Note that the conditions $X^{(k)}(t) \succ \beta I $ and $X^{(ij)}(t) \succ \beta I$ are implied by (\ref{eq:strongstabilitycondition}). Even though redundant, these conditions are explicitly given in the statement of the lemma to stress that the terms  $X^{(k)}(t)$ and $X^{(ij)}(t)$ do not approach singular matrices as $t$ approaches infinity, as explained above. Uniform positive definiteness will be used in due course to guarantee the invertibility and the boundedness of the inverses of such terms. Subsequently, we no longer specify the dimensions of $X^{(k)}(t)$ and $X^{(ij)}(t)$.

Systems that satisfy the conditions in Lemma \ref{strongstabilitylemma} are called strongly stable. Lemma~\ref{strongstabilitylemma} provides a sufficient condition for stability, and so strong stability implies stability, but the converse is not always true. Specifically, strongly stable systems are stable systems which have the required structured solutions to (\ref{eq:strongstabilitycondition}). The proposed BT scheme suffers from conservatism as it only applies to strongly stable systems. However, this imposed structure on the solutions of (\ref{eq:strongstabilitycondition}) allows for the preservation and the simplification of the interconnection structure during the model reduction process. \begin{NoHyper}{\cite{Trnka2013,sootla2016}}\end{NoHyper} identify classes of systems with guaranteed structured solutions to LMIs.

For a stable system $G$ mapping $u \in \ell_2$ to $y\in \ell_2$ and starting from zero initial conditions, the $\ell_2$-induced norm is defined as $\|G\|=\sup_{0 \neq u\in\ell_2} \frac{\|y\|_2}{\|u\|_2}$.
\vskip 3mm
\begin{lmm}\label{performancelemma}
System $G$ is strongly stable and satisfies $\|G\|< \gamma$, for some $\gamma>0$, if there exist $\beta>0$ and uniformly bounded, positive definite, matrix-valued functions $X^{(k)}(t)\succ \beta I$ and $X^{(ij)}(t)\succ \beta I$, for all $(t,k)\in \mathbb{N}_0\times V$ and $(i,j) \in E$, such that
\begin{equation}\label{eq:performanceinequality}
\begin{bmatrix}A^{(k)}(t) & B^{(k)}(t)\\ C^{(k)}(t) & D^{(k)}(t)\end{bmatrix}^T
\begin{bmatrix}
X^{(k)}(t+1)& 0 & 0\\
0 & X_{\Out}^{(k)}(t+1) & 0\\
0 & 0 & I
\end{bmatrix}\begin{bmatrix}A^{(k)}(t) & B^{(k)}(t)\\ C^{(k)}(t) & D^{(k)}(t)\end{bmatrix}
-\begin{bmatrix}
X^{(k)}(t) & 0 & 0\\
0  & X_{\In}^{(k)}(t) & 0\\
0  & 0 & \gamma^2 I
\end{bmatrix} \prec -\beta I.
\end{equation}
\end{lmm}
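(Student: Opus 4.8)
The plan is to read (\ref{eq:performanceinequality}) as a dissipation inequality for the interconnected system, with supply rate $\gamma^2\|u^{(k)}(t)\|^2-\|y^{(k)}(t)\|^2$ and storage furnished by the weights $X^{(k)}(t)$, $X^{(ij)}(t)$; this is the distributed, time-varying counterpart of the discrete-time bounded real lemma. The first step is to extract strong stability: deleting from the left-hand side of (\ref{eq:performanceinequality}) the rows and columns associated with $u^{(k)}(t)$ leaves a principal submatrix equal to $A^{(k)}(t)^T\diag\!\big(X^{(k)}(t+1),X_{\Out}^{(k)}(t+1)\big)A^{(k)}(t)+C^{(k)}(t)^TC^{(k)}(t)-\diag\!\big(X^{(k)}(t),X_{\In}^{(k)}(t)\big)$, which is therefore $\prec-\beta I$; dropping the positive semidefinite term $C^{(k)}(t)^TC^{(k)}(t)$ gives exactly (\ref{eq:strongstabilitycondition}) with the same $\beta$ and weights, so Lemma~\ref{strongstabilitylemma} yields strong stability, hence stability, and in particular for every $u\in\ell_2$ the state trajectory and $y$ are in $\ell_2$, so $\|G\|$ is well-defined.

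For the norm bound, fix $u\in\ell_2$ with corresponding solution $x^{(k)}$, $x^{(ij)}$, $y^{(k)}$ of (\ref{eq:distributedsystemequations}), and define the storage function $S(t)=\sum_{k\in V}x^{(k)}(t)^TX^{(k)}(t)x^{(k)}(t)+\sum_{(i,j)\in E}x^{(ij)}(t)^TX^{(ij)}(t)x^{(ij)}(t)$, which is nonnegative and equals $0$ at $t=0$. Pre- and post-multiplying (\ref{eq:performanceinequality}) by $\VEC(x^{(k)}(t),x_{\In}^{(k)}(t),u^{(k)}(t))$ and its transpose and substituting the state and output equations of (\ref{eq:distributedsystemequations}) gives, for each $(t,k)$, a scalar inequality whose left-hand side is $x^{(k)}(t+1)^TX^{(k)}(t+1)x^{(k)}(t+1)+x_{\Out}^{(k)}(t+1)^TX_{\Out}^{(k)}(t+1)x_{\Out}^{(k)}(t+1)+\|y^{(k)}(t)\|^2-x^{(k)}(t)^TX^{(k)}(t)x^{(k)}(t)-x_{\In}^{(k)}(t)^TX_{\In}^{(k)}(t)x_{\In}^{(k)}(t)-\gamma^2\|u^{(k)}(t)\|^2$ and whose right-hand side is $-\beta(\|x^{(k)}(t)\|^2+\|x_{\In}^{(k)}(t)\|^2+\|u^{(k)}(t)\|^2)$.

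Now I sum over $k\in V$. Using the block-diagonal definitions of $X_{\In}^{(k)}$, $X_{\Out}^{(k)}$ and the fact that each edge state $x^{(ij)}$ appears exactly once among the $x_{\Out}^{(k)}$ and exactly once among the $x_{\In}^{(k)}$, one has $\sum_{k}x_{\Out}^{(k)}(\tau)^TX_{\Out}^{(k)}(\tau)x_{\Out}^{(k)}(\tau)=\sum_{k}x_{\In}^{(k)}(\tau)^TX_{\In}^{(k)}(\tau)x_{\In}^{(k)}(\tau)=\sum_{(i,j)\in E}x^{(ij)}(\tau)^TX^{(ij)}(\tau)x^{(ij)}(\tau)$, so the summed left-hand side collapses to $S(t+1)-S(t)+\|y(t)\|^2-\gamma^2\|u(t)\|^2$, where $\|y(t)\|^2=\sum_k\|y^{(k)}(t)\|^2$ and $\|u(t)\|^2=\sum_k\|u^{(k)}(t)\|^2$. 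Discarding the nonnegative state terms on the right but keeping $-\beta\|u(t)\|^2$ yields $S(t+1)-S(t)\le(\gamma^2-\beta)\|u(t)\|^2-\|y(t)\|^2$. Summing over $t=0,\dots,T-1$ telescopes the left side to $S(T)\ge0$, so $\sum_{t=0}^{T-1}\|y(t)\|^2\le(\gamma^2-\beta)\sum_{t=0}^{T-1}\|u(t)\|^2$ for every $T$; letting $T\to\infty$ gives $\|y\|_2^2\le(\gamma^2-\beta)\|u\|_2^2$, hence $\|G\|\le\sqrt{\gamma^2-\beta}<\gamma$ (that $\gamma^2>\beta$ is itself forced by the $(u,u)$-block of (\ref{eq:performanceinequality})).

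The quadratic-form expansions are routine; the step that needs care is the reindexing in the sum over $k$ — checking that the weighted ``outgoing'' edge energy at time $t+1$ and the weighted ``incoming'' edge energy at time $t$ are the same quantity $\sum_{(i,j)\in E}x^{(ij)}(\cdot)^TX^{(ij)}(\cdot)x^{(ij)}(\cdot)$, which is precisely what lets the distributed storage function telescope in time as in the lumped case, and the one-step communication latency built into (\ref{eq:distributedsystemequations}) is what makes these two bookkeepings line up. Strong stability is used only to guarantee $y\in\ell_2$ so that $\|G\|$ is meaningful — each finite-horizon energy inequality above holds irrespective of it.
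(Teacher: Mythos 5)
Your proof is correct. Note that the paper itself gives no proof of Lemma~\ref{performancelemma}: it is quoted, together with Lemma~\ref{strongstabilitylemma}, as a summarized analysis result from an earlier reference, so there is nothing in the text to compare against line by line. What you have written is the standard dissipation argument one would expect behind such a statement, and the two points that actually need checking in the distributed setting are both handled correctly: (i) the $(1,1)$ principal block of (\ref{eq:performanceinequality}) is $A^{(k)}(t)^T\diag(X^{(k)}(t{+}1),X_{\Out}^{(k)}(t{+}1))A^{(k)}(t)+C^{(k)}(t)^TC^{(k)}(t)-\diag(X^{(k)}(t),X_{\In}^{(k)}(t))$, so discarding the positive semidefinite output term recovers (\ref{eq:strongstabilitycondition}) with the same $\beta$ and hence strong stability; and (ii) because every edge state $x^{(ij)}$ occurs exactly once among the out-vectors (at vertex $i$) and exactly once among the in-vectors (at vertex $j$), with the matching diagonal weight $X^{(ij)}$, the summed out-energy at $t{+}1$ and in-energy at $t$ are both the full edge contribution to $S$, which is what makes the aggregate storage telescope despite the one-step latency. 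The finite-horizon inequality $\sum_{t=0}^{T-1}\|y(t)\|^2\le(\gamma^2-\beta)\|u\|_2^2$ then gives the strict bound $\|G\|\le\sqrt{\gamma^2-\beta}<\gamma$ since $\beta>0$, and your observation that $\gamma^2>\beta$ is forced by the $(u,u)$ block closes the last loose end.
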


\section{Balanced Truncation Model Reduction}\label{balancedtruncationmodelreductionsection}
\subsection{Balanced Realization}
We now extend the notions of generalized Lyapunov inequalities and generalized gramians, discussed in \begin{NoHyper}{\cite{hinrichsen1990,beck1996}}\end{NoHyper}, to the class of distributed systems. Namely, the controllability and observability generalized gramians are uniformly bounded, positive definite, matrix-valued functions, denoted by $X^{(k)}(t)$, $X^{(ij)}(t)$ and $Y^{(k)}(t)$, $Y^{(ij)}(t)$, respectively, which, for some scalar $\beta > 0$, satisfy $X^{(k)}(t)\succ \beta I$, $X^{(ij)}(t)\succ \beta I$ and $Y^{(k)}(t)\succ \beta I$, $Y^{(ij)}(t)\succ \beta I$, in addition to the following generalized Lyapunov inequalities:
\begin{align}
A^{(k)}(t)
\begin{bmatrix}
X^{(k)}(t) & 0 \\
0 & X_{\In}^{(k)}(t)
\end{bmatrix}A^{(k)}(t)^T
-\begin{bmatrix}
X^{(k)}(t+1)&0\\
0& X_{\Out}^{(k)}(t+1)\end{bmatrix} +B^{(k)}(t)B^{(k)}(t)^T \prec -\beta I, \label{eq:lyapinequ1}\\
A^{(k)}(t)^T
\begin{bmatrix}
Y^{(k)}(t+1)& 0\\
0 & Y_{\Out}^{(k)}(t+1)\end{bmatrix}A^{(k)}(t)-
\begin{bmatrix}
Y^{(k)}(t)&0\\
0& Y_{\In}^{(k)}(t)
\end{bmatrix}+C^{(k)}(t)^T C^{(k)}(t)\prec -\beta I, \label{eq:lyapinequ2}
\end{align}
where $Y_{\In}^{(k)}(t)$ and $Y_{\Out}^{(k)}(t)$ are defined similarly to $X_{\In}^{(k)}(t)$ and $X_{\Out}^{(k)}(t)$. The generalized Lyapunov inequalities and the generalized gramians allow for the definition of a balanced realization of a distributed system as given next.
\vskip 3mm
\begin{dfn}\label{balancedrealizationdefinition}
The realization of system $G$ is said to be balanced if there exist $\beta>0$ and uniformly bounded, diagonal, positive definite, matrix-valued functions $\Sigma^{(k)}(t)\succ \beta I$ and $\Sigma^{(ij)}(t)\succ \beta I$, for all $(t,k)\in \mathbb{N}_0\times V$ and $(i,j) \in E$, that simultaneously satisfy (\ref{eq:lyapinequ1}) and (\ref{eq:lyapinequ2}), i.e., $\Sigma^{(k)}(t)=X^{(k)}(t)=Y^{(k)}(t)$, $\Sigma^{(ij)}(t)=X^{(ij)}(t)=Y^{(ij)}(t)$, and $\Sigma^{(k)}(t)$ and $\Sigma^{(ij)}(t)$ are diagonal matrices.
\end{dfn}

We can show (see the proof of  Theorem \ref{eventuallyperiodicsolutionstostability}) that the existence of solutions to (\ref{eq:strongstabilitycondition}) is equivalent to the existence of solutions to (\ref{eq:lyapinequ1}) and (\ref{eq:lyapinequ2}), respectively. Thus, the generalized gramians are only defined for strongly stable systems. These gramians can be used to construct a balanced realization and balanced generalized gramians $\Sigma^{(k)}(t)$, $\Sigma^{(ij)}(t)$ for a given system $G$, as outlined next.
\vskip 3mm
\begin{algm}\label{algorithm}
We construct a balanced realization for a given strongly stable system $G$ with generalized gramians $X^{(k)}(t)$, $X^{(ij)}(t)$ and $Y^{(k)}(t)$, $Y^{(ij)}(t)$ as follows.
First, we compute the Cholesky factorizations
\[X^{(k)}(t)=R^{(k)}(t)^TR^{(k)}(t)\quad \quad \mbox{and} \quad \quad  Y^{(k)}(t)=H^{(k)}(t)^TH^{(k)}(t),\]
and we perform the singular value decompositions
\[H^{(k)}(t)R^{(k)}(t)^T=U^{(k)}(t)\Sigma^{(k)}(t)V^{(k)}(t)^T.\]
Then, we define the balancing transformations
\[T^{(k)}(t)=\Sigma^{(k)}(t)^{-1/2}\,\,U^{(k)}(t)^T\,\,H^{(k)}(t) \quad \quad \mbox{and} \quad \quad
T^{(k)}(t)^{-1}=R^{(k)}(t)^T\,\,V^{(k)}(t)\,\,\Sigma^{(k)}(t)^{-1/2}.\]
Similar steps are repeated for the spatial terms. $\Sigma^{(k)}(t)$, $\Sigma^{(ij)}(t)$ are the balanced generalized gramians.
We augment the obtained transformations as in
\[T_{\pre}^{(k)}(t)=\diag(T^{(k)}(t),T_{\Out}^{(k)}(t)), \quad \quad \mbox{and} \quad \quad T_{\post}^{(k)}(t)=\diag(T^{(k)}(t)^{-1},T_{\In}^{(k)}(t)^{-1}).\]
A balanced realization  $(A_{\bal}^{(k)}(t), B_{\bal}^{(k)}(t), C_{\bal}^{(k)}(t), D^{(k)}(t))$ of $G$ is then given by \[A_{\bal}^{(k)}(t)=T_{\pre}^{(k)}(t+1)A^{(k)}(t)T_{\post}^{(k)}(t),\quad B_{\bal}^{(k)}(t)=T_{\pre}^{(k)}(t+1)B^{(k)}(t), \quad \mbox{and} \quad C_{\bal}^{(k)}(t)=C^{(k)}(t)T_{\post}^{(k)}(t).\]
\end{algm}

An alternative algorithm is also given in \begin{NoHyper}{\cite{DAJMFCDC15}}\end{NoHyper}. Clearly, the balanced realization of a strongly stable system is not unique as it depends on the followed algorithm as well as the solutions to (\ref{eq:lyapinequ1}) and (\ref{eq:lyapinequ2}) used in the algorithm. To obtain useful results for model reduction, namely entries in the balanced generalized gramians that yield reasonable error bounds, we use a trace heuristic, i.e., we find generalized gramians with minimum sum of traces, see e.g., \begin{NoHyper}{\cite{farhood2007}}\end{NoHyper} and \begin{NoHyper}{\cite{beck2014}}\end{NoHyper}. An alternative heuristic is proposed in \begin{NoHyper}{\cite{Altaie2015}}\end{NoHyper}.

\subsection{Balanced Truncation}
Let $G$ be a distributed system with a balanced realization $\left(A^{(k)}(t), B^{(k)}(t), C^{(k)}(t), D^{(k)}(t)\right)$. We assume, without loss of generality, that the diagonal entries of the balanced generalized gramians are ordered in a decreasing fashion. The essence of BT is to truncate the state variables associated with the negligible entries. We partition the gramians into two blocks: one corresponding to the non-truncated states and the other to the truncated states. We illustrate the partitioning process for the temporal terms. Given integers $r^{(k)}(t)$, such that $0\le r^{(k)}(t) \le n^{(k)}(t)$, we partition $\Sigma^{(k)}(t)$  as
$\Sigma^{(k)}(t)=\diag(\Gamma^{(k)}(t), \,\Omega^{(k)}(t))$,
where $\Gamma^{(k)}(t) \in \mathbb{S}^{r^{(k)}(t)}$ are reduced order gramians associated with the non-truncated states and $\Omega^{(k)}(t)$ correspond to the truncated states. By allowing $r^{(k)}(t)$ to be equal to $0$ or $n^{(k)}(t)$ for some $(t,k)$, we allow that either all or no variables be truncated from the corresponding temporal state $x^{(k)}(t)$. This results in either $\Gamma^{(k)}(t)$ or $\Omega^{(k)}(t)$ having a zero dimension, which is a slight abuse of notation. The proposed method allows for the evaluation of the importance of a particular interconnection, and accordingly, the reduction of the dimension of the spatial state vector associated with it and even the complete removal of the interconnection. For example, if $r^{(ij)}(t)=0$, for all $t\in\mathbb{N}_0$, then the edge $(i,j)$ is removed altogether from the interconnection structure.

The next step is to partition the blocks of the state-space matrices in accordance with the partitioning of the blocks of $\diag(\Sigma^{(k)}(t+1),\Sigma_{\Out}^{(k)}(t+1))$ and $\diag(\Sigma^{(k)}(t),\Sigma_{\In}^{(k)}(t))$. Consider subsystem $G^{(1)}$ in Figure \ref{fig:digraphillustration}. $A_{00}^{(1)}(t)$ is partitioned according to the partitioning of $\Sigma^{(1)}(t+1)=\diag(\Gamma^{(1)}(t+1), \,\Omega^{(1)}(t+1))$  and $\Sigma^{(1)}(t)=\diag(\Gamma^{(1)}(t), \,\Omega^{(1)}(t))$ as in
\[A_{00}^{(1)}(t)=\begin{bmatrix}
\hat{A}_{00}^{(1)}(t)& A_{00_{12}}^{(1)}(t)\\
 A_{00_{21}}^{(1)}(t)& A_{00_{22}}^{(1)}(t)
\end{bmatrix}, \mbox{ where $\hat{A}_{00}^{(1)}(t)$ is an $r^{(1)}(t+1){\times} r^{(1)}(t)$ matrix.}\]
$B_0^{(1)}(t)$ is partitioned conformably with the partitioning of $\Sigma^{(1)}(t+1)=\diag(\Gamma^{(1)}(t+1), \,\Omega^{(1)}(t+1))$ as in \[B_0^{(1)}(t)=\begin{bmatrix}
\hat{B}_0^{(1)}(t)\\
  B_{0_2}^{(1)}(t)\end{bmatrix}, \mbox{ where $\hat{B}_0^{(1)}(t)$ is an $r^{(1)}(t+1){\times} n_u^{(1)}(t)$ matrix.}\]
Likewise, $C_0^{(1)}(t)$ is partitioned according to the partitioning of $\Sigma^{(1)}(t)=\diag(\Gamma^{(1)}(t), \,\Omega^{(1)}(t))$, namely,
\[C_0^{(1)}(t)=\begin{bmatrix}
\hat{C}_0^{(1)}(t)&
 C_{0_2}^{(1)}(t)
\end{bmatrix}, \mbox{ where $\hat{C}_0^{(1)}(t)$ is an $n_y^{(1)}(t)\times r^{(1)}(t)$ matrix, and so on.}\]
Now, we form the realization $(A_r^{(k)}(t), B_r^{(k)}(t), C_r^{(k)}(t), D^{(k)}(t))$ of the reduced order system $G_r$. For $A_r^{(k)}(t)$, $B_r^{(k)}(t)$, and $C_r^{(k)}(t)$, we keep the blocks that correspond to the non-truncated states, i.e., the partitions marked with a hat, e.g., $C_r^{(1)}(t)=\begin{bmatrix}
\hat{C}_0^{(1)}(t) & \hat{C}_2^{(1)}(t)& \hat{C}_3^{(1)}(t) & \hat{C}_4^{(1)}(t)\end{bmatrix}$,
\[A_r^{(1)}(t)=\begin{bmatrix}
\rule{0mm}{5mm}\hat{A}_{00}^{(1)}(t) & \hat{A}_{02}^{(1)}(t)& \hat{A}_{03}^{(1)}(t) & \hat{A}_{04}^{(1)}(t)\\
\rule{0mm}{5mm}\hat{A}_{20}^{(1)}(t) & \hat{A}_{22}^{(1)}(t)& \hat{A}_{23}^{(1)}(t) & \hat{A}_{24}^{(1)}(t)\\
\rule{0mm}{5mm}\hat{A}_{30}^{(1)}(t) & \hat{A}_{32}^{(1)}(t)& \hat{A}_{33}^{(1)}(t) & \hat{A}_{34}^{(1)}(t)\\
\rule{0mm}{5mm}\hat{A}_{50}^{(1)}(t) & \hat{A}_{52}^{(1)}(t)& \hat{A}_{53}^{(1)}(t) & \hat{A}_{54}^{(1)}(t)
\end{bmatrix}, \quad \quad \mbox{and} \quad \quad
B_r^{(1)}(t)=\begin{bmatrix}
\hat{B}_0^{(1)}(t)\\
\rule{0mm}{5mm}\hat{B}_2^{(1)}(t)\\
\rule{0mm}{5mm}\hat{B}_3^{(1)}(t)\\
\rule{0mm}{5mm}\hat{B}_5^{(1)}(t)
\end{bmatrix}.\]
It will be useful to permute the original state-space matrices and balanced gramians in order to group together the non-truncated blocks. For example,
\[
A_{b}^{(k)}(t)=\begin{bmatrix}
\rule{0mm}{4.5mm}A_r^{(k)}(t) & \bar{A}_{12}^{(k)}(t)\\
\rule{0mm}{4.5mm}\bar{A}_{21}^{(k)}(t) & \bar{A}_{22}^{(k)}(t)
\end{bmatrix}, \quad \quad \Gamma^{\In}_k(t)=
\begin{bmatrix}
\rule{0mm}{4.5mm}\Gamma^{(k)}(t) & 0\\
\rule{0mm}{4.5mm}0 & \Gamma_{\In}^{(k)}(t)
\end{bmatrix},\quad \,\, \mbox{and} \,\, \quad \Gamma^{\Out}_k(t)=
\begin{bmatrix}
\rule{0mm}{4.5mm}\Gamma^{(k)}(t)&0\\
\rule{0mm}{4.5mm}0& \Gamma_{\Out}^{(k)}(t)\end{bmatrix},\]
for appropriately defined $\bar{A}_{12}^{(k)}(t)$, $\bar{A}_{21}^{(k)}(t)$, and $\bar{A}_{22}^{(k)}(t)$. We define $B_{b}^{(k)}(t)$ and $C_b^{(k)}(t)$ similarly to $A_{b}^{(k)}(t)$, and $\Omega^{\In}_k(t)$ and $\Omega^{\Out}_k(t)$ similarly to $\Gamma^{\In}_k(t)$ and $\Gamma^{\Out}_k(t)$, respectively.
\vskip 3mm
\begin{lmm}\label{reducedrealizationstableandbalanced}
System $G_r$ is strongly stable and the given realization $(A_r^{(k)}(t), B_r^{(k)}(t), C_r^{(k)}(t), D^{(k)}(t))$ is balanced with balanced generalized gramians $\Gamma^{(k)}(t)$ and $\Gamma^{(ij)}(t)$, for all $(t,k)\in \mathbb{N}_0\times V$ and $(i,j) \in E$.
\end{lmm}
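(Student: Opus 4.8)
The plan is to show that the balanced generalized gramians $\Sigma^{(k)}(t)$, $\Sigma^{(ij)}(t)$ of $G$, restricted to the retained coordinates, are themselves generalized gramians for $G_r$; then Definition~\ref{balancedrealizationdefinition}, together with the equivalence recalled after that definition between strong stability and the existence of generalized gramians, yields both assertions at once.

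First I would fix $\beta>0$ such that the diagonal, uniformly bounded $\Sigma^{(k)}(t)\succ\beta I$, $\Sigma^{(ij)}(t)\succ\beta I$ satisfy the generalized Lyapunov inequalities (\ref{eq:lyapinequ1}) and (\ref{eq:lyapinequ2}) with $X^{(k)}=Y^{(k)}=\Sigma^{(k)}$ and $X^{(ij)}=Y^{(ij)}=\Sigma^{(ij)}$; such a $\beta$ exists because the realization of $G$ is balanced. Since each edge state is shared by its head and tail subsystems, partitioning each $\Sigma^{(ij)}(t)=\diag(\Gamma^{(ij)}(t),\Omega^{(ij)}(t))$ once induces a splitting of $\Sigma_{\In}^{(k)}(t)$ and $\Sigma_{\Out}^{(k)}(t)$ that is consistent across subsystems, so $(A_r^{(k)}(t),B_r^{(k)}(t),C_r^{(k)}(t),D^{(k)}(t))$ is a legitimate realization over the same directed graph and is well-posed by the same argument that established well-posedness of $G$.

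Next, for each $(t,k)$ I would apply the coordinate permutation that groups the retained blocks first, producing $A_b^{(k)}(t)$, $B_b^{(k)}(t)$, $C_b^{(k)}(t)$, $\Gamma^{\In}_k(t)$, $\Gamma^{\Out}_k(t)$, $\Omega^{\In}_k(t)$, $\Omega^{\Out}_k(t)$ as defined in the text. A permutation is an orthogonal congruence, so (\ref{eq:lyapinequ1}) and (\ref{eq:lyapinequ2}) keep their form with $A^{(k)}$, $B^{(k)}$, $C^{(k)}$ and $\diag(\Sigma^{(k)},\Sigma_{\In}^{(k)})$, $\diag(\Sigma^{(k)},\Sigma_{\Out}^{(k)})$ replaced by $A_b^{(k)}$, $B_b^{(k)}$, $C_b^{(k)}$ and $\diag(\Gamma^{\In}_k,\Omega^{\In}_k)$, $\diag(\Gamma^{\Out}_k,\Omega^{\Out}_k)$, with right-hand side still $-\beta I$. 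I would then extract the leading principal (retained-coordinate) block of each matrix inequality; a leading principal block of a matrix $\prec-\beta I$ is again $\prec-\beta I$. For controllability this block equals the left-hand side of (\ref{eq:lyapinequ1}) written for $(A_r^{(k)},B_r^{(k)})$ with gramians $\diag(\Gamma^{(k)}(t),\Gamma_{\In}^{(k)}(t))$ and $\diag(\Gamma^{(k)}(t{+}1),\Gamma_{\Out}^{(k)}(t{+}1))$, plus the extra term $\bar{A}_{12}^{(k)}(t)\,\Omega^{\In}_k(t)\,\bar{A}_{12}^{(k)}(t)^T\succeq0$; for observability it equals the left-hand side of (\ref{eq:lyapinequ2}) written for $(A_r^{(k)},C_r^{(k)})$ with the same gramians, plus $\bar{A}_{21}^{(k)}(t)^T\,\Omega^{\Out}_k(t{+}1)\,\bar{A}_{21}^{(k)}(t)\succeq0$. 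Dropping these positive semidefinite terms, which is licit since $\Omega^{\In}_k(t)\succ0$ and $\Omega^{\Out}_k(t{+}1)\succ0$, preserves the strict inequality $\prec-\beta I$, and undoing the permutation shows that $\Gamma^{(k)}$, $\Gamma^{(ij)}$ satisfy (\ref{eq:lyapinequ1}) and (\ref{eq:lyapinequ2}) for the realization $(A_r^{(k)},B_r^{(k)},C_r^{(k)},D^{(k)})$ with $X^{(k)}=Y^{(k)}=\Gamma^{(k)}$, $X^{(ij)}=Y^{(ij)}=\Gamma^{(ij)}$ and the same $\beta$. Finally, $\Gamma^{(k)}(t)$, $\Gamma^{(ij)}(t)$ are diagonal and uniformly bounded as principal diagonal blocks of the diagonal, uniformly bounded $\Sigma^{(k)}(t)$, $\Sigma^{(ij)}(t)$, and $\Sigma^{(k)}(t)\succ\beta I$, $\Sigma^{(ij)}(t)\succ\beta I$ force $\Gamma^{(k)}(t)\succ\beta I$, $\Gamma^{(ij)}(t)\succ\beta I$; together with the uniform boundedness of the submatrices $A_r^{(k)}$, $B_r^{(k)}$, $C_r^{(k)}$, this makes the given realization of $G_r$ balanced with balanced generalized gramians $\Gamma^{(k)}$, $\Gamma^{(ij)}$, and hence strongly stable.

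I expect the bookkeeping in the permutation step to be the main obstacle: one must verify that the rows of $A_b^{(k)}(t)$ are ordered so that the retained block of the ``$(t{+}1)$-time/outgoing'' coordinates is exactly $\diag(\Gamma^{(k)}(t{+}1),\Gamma_{\Out}^{(k)}(t{+}1))$ and its columns so that the retained block of the ``$t$-time/incoming'' coordinates is exactly $\diag(\Gamma^{(k)}(t),\Gamma_{\In}^{(k)}(t))$, in agreement with the way $B_r^{(k)}$ and $C_r^{(k)}$ are formed and with the shared partition of every edge state; once these alignments are pinned down, the semidefiniteness argument is immediate. A minor point to dispose of is the degenerate cases $r^{(k)}(t)\in\{0,n^{(k)}(t)\}$ and the edge analogue, where some of these blocks have zero dimension, which are covered by the conventions already adopted in the text.
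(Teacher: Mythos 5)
Your proposal is correct and follows essentially the same route as the paper: permute to group the retained blocks, then read off the Lyapunov inequalities for $(A_r^{(k)},B_r^{(k)},C_r^{(k)},D^{(k)})$ from the permuted inequalities (\ref{eq:appropriatelypermuted1})--(\ref{eq:appropriatelypermuted2}). You merely make explicit the step the paper leaves as ``we can infer,'' namely extracting the leading principal block and discarding the positive semidefinite cross terms $\bar{A}_{12}^{(k)}(t)\,\Omega^{\In}_k(t)\,\bar{A}_{12}^{(k)}(t)^T$ and $\bar{A}_{21}^{(k)}(t)^T\,\Omega^{\Out}_k(t+1)\,\bar{A}_{21}^{(k)}(t)$.
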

\begin{proof}
Since the realization of $G$ is balanced, then there exist balanced generalized gramians simultaneously solving (\ref{eq:lyapinequ1}) and (\ref{eq:lyapinequ2}). Applying appropriate permutations to (\ref{eq:lyapinequ1}) and (\ref{eq:lyapinequ2}), we obtain
\begin{align}
A_{b}^{(k)}(t)
\begin{bmatrix}
\Gamma^{\In}_k(t)&0\\
0&\Omega^{\In}_k(t)
\end{bmatrix}A_{b}^{(k)}(t)^T
-\begin{bmatrix}
\Gamma^{\Out}_k(t+1)&0\\
0&\Omega^{\Out}_k(t+1)
\end{bmatrix}+B_{b}^{(k)}(t)B_{b}^{(k)}(t)^T\prec -\beta I, \label{eq:appropriatelypermuted1}\\
A_{b}^{(k)}(t)^T
\begin{bmatrix}
\Gamma^{\Out}_k(t+1)&0\\
0&\Omega^{\Out}_k(t+1)
\end{bmatrix}A_{b}^{(k)}(t)
-\begin{bmatrix}
\Gamma^{\In}_k(t)&0\\
0&\Omega^{\In}_k(t)
\end{bmatrix}+C_{b}^{(k)}(t)^TC_{b}^{(k)}(t)\prec -\beta I. \label{eq:appropriatelypermuted2}
\end{align}
From these inequalities, we can infer that
\begin{align*}
A_r^{(k)}(t)\,\diag\left(
\Gamma^{(k)}(t),\Gamma_{\In}^{(k)}(t)\right)A_r^{(k)}(t)^T
-\,\diag\left(\Gamma^{(k)}(t+1),\Gamma_{\Out}^{(k)}(t+1)\right) + B_r^{(k)}(t)B_r^{(k)}(t)^T \prec -\beta I,\\
A_r^{(k)}(t)^T\,\diag\left(
\Gamma^{(k)}(t+1),\Gamma_{\Out}^{(k)}(t+1)\right)A_r^{(k)}(t)
-\,\diag\left(\Gamma^{(k)}(t),\Gamma_{\In}^{(k)}(t)\right)+ C_r^{(k)}(t)^T C_r^{(k)}(t) \prec -\beta I.
\end{align*}
Thus, system $G_r$ is strongly stable and the given realization $(A_r^{(k)}(t), B_r^{(k)}(t), C_r^{(k)}(t), D^{(k)}(t))$ is balanced with balanced generalized gramians $\Gamma^{(k)}(t)$ and $\Gamma^{(ij)}(t)$.
\end{proof}

\section{Error Bounds}\label{errorboundssection}
Next, we develop upper bounds on the $\ell_2$-induced norm of the error system $(G-G_r)$, which generalize their counterparts for single LTV systems in \begin{NoHyper}{\cite{rantzer2004}}\end{NoHyper} and single nonstationary LPV systems in \begin{NoHyper}{\cite{farhood2007}}\end{NoHyper}. For each $t\in \mathbb{N}_0$, we define $\hat{\Omega}(t)=\diag(\Omega^{(k)}(t))_{k\in V}$, and let $\tilde{\Omega}(t)$ be the block-diagonal augmentation of $\Omega^{(ij)}(t)$, where $(i,j)\in E$.  The specific ordering of the diagonal blocks in $\tilde{\Omega}(t)$ is inconsequential for our purposes. Then, we define
\[\bar{\Omega}(t)=\diag(\hat{\Omega}(t),\tilde{\Omega}(t)) \quad \quad \mbox{and} \quad \quad  \Omega=\diag(\bar{\Omega}(t))_{t\in\mathbb{N}_0}.\]
Depending on the states that are to be truncated,  some diagonal blocks of $\Omega$ may have zero dimensions and, hence, are nonexistent. For instance, suppose for some $(t_0,k_0)\in  \mathbb{N}_0\times V$ and $(i_0,j_0)\in E$,  we have $n^{(k_0)}(t_0)= r^{(k_0)}(t_0)$ and $n^{(i_0j_0)}(t_0)= r^{(i_0j_0)}(t_0)$. Then, the diagonal blocks $\Omega^{(k_0)}(t_0)$ and $\Omega^{(i_0j_0)}(t_0)$ have zero dimensions and do not appear in $\Omega$. Alternatively, if  $n^{(k)}(t)\ne r^{(k)}(t)$ and $n^{(ij)}(t)\ne r^{(ij)}(t)$ only for $t=t_0$, $k=k_0$, and $(i,j)=(i_0,j_0)$, then $\Omega=\diag(\Omega^{(k_0)}(t_0),\Omega^{(i_0j_0)}(t_0))$.
\vskip 3mm
\begin{thm}\label{errorboundomegaequaltoI}
If $\Omega^{(k)}(t)=I$ and $\Omega^{(ij)}(t)=I$, for all $(t,k)\in \mathbb{N}_0\times V$ and $(i,j) \in E$,  then $\|(G-G_r)\| <2$.
\end{thm}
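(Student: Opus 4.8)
The plan is to mimic the classical one-step balanced-truncation error bound argument of Enns/Glover, adapted to the distributed, time-varying setting via the strong-stability LMIs. Since $\Omega^{(k)}(t)=I$ and $\Omega^{(ij)}(t)=I$, all truncated states — temporal and spatial — carry the same ``Hankel singular value'' $1$. The key object will be the error system $E=G-G_r$, realized by stacking the realization of $G$ (in its permuted, balanced form $(A_b^{(k)},B_b^{(k)},C_b^{(k)},D^{(k)})$) with that of $G_r=(A_r^{(k)},B_r^{(k)},C_r^{(k)},D^{(k)})$ and taking the output difference, so that $D$ cancels and $E$ is strictly proper. I would then invoke Lemma~\ref{performancelemma} for the error system: it suffices to exhibit uniformly bounded, positive definite, block-structured storage functions (temporal terms $Z^{(k)}(t)$ on the node states, spatial terms $Z^{(ij)}(t)$ on the edge states) and a scalar $\beta>0$ verifying the performance LMI \eqref{eq:performanceinequality} with $\gamma=2$.

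First I would write down the candidate storage matrix. The standard choice in the LTV case (cf.\ \cite{rantzer2004}, \cite{farhood2007}) is, for each subsystem and each time, a block of the form
\[
\begin{bmatrix}
\Gamma^{(k)}(t)+? & ? \\ ? & ?
\end{bmatrix},
\]
but the clean way is to take the state of the error system in coordinates $(\xi,\eta)$ where $\xi$ is the reduced state and $\eta$ collects the truncated-plus-full states, and use the storage function whose temporal part is built from $\Gamma^{(k)}$ (the reduced balanced gramians, which by Lemma~\ref{reducedrealizationstableandbalanced} solve the reduced Lyapunov inequalities) together with the identity blocks coming from $\Omega^{(k)}(t)=I$; similarly for the spatial parts using $\Gamma^{(ij)}$ and $\Omega^{(ij)}(t)=I$. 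The crucial algebraic input is that the original balanced gramians satisfy both \eqref{eq:lyapinequ1} and \eqref{eq:lyapinequ2} simultaneously with the \emph{same} $\diag(\Gamma,\Omega)$, and $\Omega=I$; adding the controllability inequality \eqref{eq:appropriatelypermuted1} and the observability inequality \eqref{eq:appropriatelypermuted2} (both in their permuted forms) and exploiting the cross-terms' cancellation is exactly what produces the factor $\gamma^2=4$, i.e.\ $\gamma=2$. Concretely, the sum of the two permuted inequalities, after a congruence, should yield a dissipation inequality for the error system with supply rate $4\|u\|^2-\|y\|^2$, which is \eqref{eq:performanceinequality} with $\gamma=2$; strict inequality (the ``$-\beta I$'' with some new $\beta'>0$) is preserved because we are summing two strict inequalities and the perturbation is uniform.

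The main obstacle — and the step I would spend the most care on — is organizing the block bookkeeping so that the structure required by Lemma~\ref{performancelemma} is actually respected: the storage terms must be block-diagonal over the vertices for the temporal part and must assign a single well-defined positive definite matrix $Z^{(ij)}(t)$ to each edge $(i,j)$, consistent across the two endpoint subsystems (this is the distributed analogue of the ``$X_{\In}^{(k)}=\diag(X^{(ik)})$, $X_{\Out}^{(k)}=\diag(X^{(kj)})$'' compatibility). Because both $G$ and $G_r$ are interconnected over the \emph{same} graph (edges with $r^{(ij)}(t)=0$ simply disappear, but that is harmless), the error system is itself interconnected over that graph with edge-state dimension $n^{(ij)}(t)+r^{(ij)}(t)$, and one checks that the natural candidate $Z^{(ij)}(t)=\diag\bigl(\Gamma^{(ij)}(t),\,\text{(blocks from }\Omega^{(ij)}(t)=I)\bigr)$ — possibly with a constant scaling such as a factor $2$ on appropriate blocks — is edge-consistent. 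Verifying positive definiteness and uniform boundedness of these $Z$'s is routine given the uniform positive definiteness $\Gamma\succ\beta I$, $\Omega=I$, and uniform boundedness of all gramians and state-space data. The remaining work is the (routine but lengthy) Schur-complement manipulation showing the assembled LMI holds; I would present the storage function, state that substituting it into \eqref{eq:performanceinequality} and using \eqref{eq:appropriatelypermuted1}--\eqref{eq:appropriatelypermuted2} gives the result with $\gamma=2$, and relegate the index-chasing to a computation. Then $\|G-G_r\|<2$ follows immediately from Lemma~\ref{performancelemma}.
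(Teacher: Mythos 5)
There is a genuine gap here, and it is not just index-chasing: the storage function you propose is structurally wrong, and the mechanism you invoke to certify it does not produce the required LMI. You suggest a block-diagonal certificate for the stacked error realization, built from $\Gamma^{(k)}(t)$, $\Gamma^{(ij)}(t)$ and identity blocks (up to a constant scaling), and you claim that adding the permuted controllability inequality (\ref{eq:appropriatelypermuted1}) to the permuted observability inequality (\ref{eq:appropriatelypermuted2}) yields a dissipation inequality with supply rate $4\|u\|^2-\|y\|^2$. Neither step works. With a block-diagonal storage of the form $\diag(\Gamma,\ldots)$, the performance LMI (\ref{eq:performanceinequality}) for the error system contains off-diagonal blocks such as $-C^{(k)T}C_r^{(k)}$ and input-column terms like $A^{(k)T}\Sigma^{(k)}(t{+}1)B^{(k)}$ that are not controlled by the two Lyapunov inequalities; moreover, summing (\ref{eq:appropriatelypermuted1}) (a quadratic form in $A^T$ acting on the right) with (\ref{eq:appropriatelypermuted2}) (a quadratic form in $A$) produces nothing of the shape required by Lemma \ref{performancelemma}. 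The factor of $2$ in the bound does not come from ``adding the two inequalities''; it comes from the identities $\frac{1}{2}(\Omega^{-1}+\Omega)=I$ and $\frac{1}{2}(\Omega^{-1}-\Omega)=0$ when $\Omega=I$, which is what allows the truncated directions to decouple with unit weight.

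The missing ingredients, which are exactly what the paper supplies, are: (i) apply the Schur complement twice to (\ref{eq:appropriatelypermuted1}) so that it is rewritten in terms of the \emph{inverse} gramians, and merge it with (\ref{eq:appropriatelypermuted2}) into a single quadratic inequality $K^{(k)T}R_2^{-1}K^{(k)}-R_1\prec-\beta I$ whose weights $R_1,R_2$ contain both $\Gamma,\Omega$ and $\Gamma^{-1},\Omega^{-1}$; and (ii) perform a congruence with orthogonal matrices $P^{(k)}(t)$, $L^{(k)}(t)$ whose $\frac{1}{\sqrt{2}}\bigl[\begin{smallmatrix}I&I\\-I&I\end{smallmatrix}\bigr]$ blocks mix the states of $G$ and $G_r$. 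The storage function that then certifies $\|\frac{1}{2}(G-G_r)\|<1$ (equivalently, $\gamma=2$ for $G-G_r$) has diagonal blocks $\frac{1}{2}(\Gamma^{-1}+\Gamma)$ and off-diagonal cross-coupling blocks $\frac{1}{2}(\Gamma^{-1}-\Gamma)$ between the $G$-part and the $G_r$-part of the error state. This cross-coupling, and the appearance of $\Gamma^{-1}$, is essential and is absent from your candidate $Z^{(ij)}(t)=\diag(\Gamma^{(ij)}(t),I)$. Your instinct that edge-consistency of the spatial storage terms is the delicate structural point is correct, but it is resolved by this specific coupled structure applied edge-by-edge, not by a diagonal one.
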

\begin{proof}
To prove this result, we construct solutions to (\ref{eq:performanceinequality}) for a realization of the error system $\frac{1}{2}(G-G_r)$ and $\gamma=1$. Note that, since $G$ and $G_r$ are strongly stable, so is $\frac{1}{2}(G-G_r)$. We apply the Schur complement formula twice to (\ref{eq:appropriatelypermuted1}) and invoke (\ref{eq:appropriatelypermuted2}) to show that
\begin{equation}\label{ineq1_mp}
K^{(k)}(t)^TR_2^{(k)}(t+1)^{-1}K^{(k)}(t)-R_1^{(k)}(t) \prec - \beta I,
\end{equation}
\begin{align*}
&\mbox{where } K^{(k)}(t)=\begin{bmatrix}
\rule{0mm}{4.5mm}0&  0 & A_b^{(k)}(t)\\
\rule{0mm}{4.5mm}0& 0 & C_b^{(k)}(t)\\
\rule{0mm}{4.5mm}A_b^{(k)}(t) & B_b^{(k)}(t) & 0 \\
\end{bmatrix}, \quad R_1^{(k)}(t)=
\begin{bmatrix}
\Gamma^{\In}_k(t)^{-1} & 0 & 0 & 0 & 0\\
0 & \Omega^{\In}_k(t)^{-1} & 0 & 0 & 0\\
0 & 0 &I_{n_{u}^{(k)}(t)} & 0 & 0\\
0 & 0 & 0 & \Gamma^{\In}_k(t) & 0\\
0 & 0 & 0 & 0 & \Omega^{\In}_k(t)
\end{bmatrix},\\
&R_2^{(k)}(t+1)=
\begin{bmatrix}
\Gamma^{\Out}_k(t+1)^{-1} & 0 & 0 & 0 & 0\\
0 & \Omega^{\Out}_k(t+1)^{-1} & 0 & 0 & 0\\
0 & 0 &I_{n_{y}^{(k)}(t)} & 0 & 0\\
0 & 0 & 0 & \Gamma^{\Out}_k(t+1) & 0\\
0 & 0 & 0 & 0 & \Omega^{\Out}_k(t+1)
\end{bmatrix}.
\end{align*}
To simplify the algebraic manipulations when applying the Schur complement formula, we provisionally set the $-\beta I$ term in (\ref{eq:appropriatelypermuted1}) to zero. We do subsequently add some $-\beta I$ term to the right-hand side of (\ref{ineq1_mp}) just to  emphasize that the left-hand side is uniformly negative definite. We now pre- and post-multiply (\ref{ineq1_mp}) by $P^{(k)}(t)^T$  and $P^{(k)}(t)$, respectively, and insert $L^{(k)}(t)^TL^{(k)}(t)=I$ to get
\begin{multline*}
\left(L^{(k)}(t)K^{(k)}(t)P^{(k)}(t)\right)^T\left(L^{(k)}(t)R_2^{(k)}(t+1)^{-1}L^{(k)}(t)^{T}\right)\left(L^{(k)}(t)K^{(k)}(t)P^{(k)}(t)\right)\\
-P^{(k)}(t)^TR_1^{(k)}(t)P^{(k)}(t)  \prec - \beta I,
\end{multline*}
where $P^{(k)}(t)$ and $L^{(k)}(t)$ are, respectively, defined as
\[P^{(k)}(t)=\frac{1}{\sqrt{2}}\!\!\begin{bmatrix}
I & I & 0& 0        & 0\\
0 & 0 & I& 0        & I\\
0 & 0 & 0&\sqrt{2}I_{n_{u}^{(k)}(t)} & 0\\
-I& I & 0& 0        & 0\\
0 & 0 & I& 0        &-I
\end{bmatrix} \quad \quad \mbox{and} \quad \quad L^{(k)}(t)=\frac{1}{\sqrt{2}}\begin{bmatrix}
-I& 0 & 0         & I& 0\\
I & 0 & 0         & I& 0\\
0 & I & 0         & 0& I\\
0 & 0 & \sqrt{2}I_{n_{y}^{(k)}(t)} & 0& 0\\
0 & -I& 0         & 0& I
\end{bmatrix}.\]
$P^{(k)}(t)^{{T}}R_1^{(k)}(t)P^{(k)}(t)$ and $L^{(k)}(t)R_2^{(k)}(t+1)^{-1}L^{(k)}(t)^{{T}}$ have a similar structure:
$P^{(k)}(t)^{{T}}R_1^{(k)}(t)P^{(k)}(t)=$
\[
\frac{1}{2}\begin{bmatrix}
\Gamma^{\In}_k(t)^{-1}+\Gamma^{\In}_k(t) & \rule{0mm}{4.5mm}\Gamma^{\In}_k(t)^{-1}-\Gamma^{\In}_k(t) & 0 & 0 & 0\\
\Gamma^{\In}_k(t)^{-1}-\Gamma^{\In}_k(t) & \rule{0mm}{4.5mm}\Gamma^{\In}_k(t)^{-1}+\Gamma^{\In}_k(t) & 0 & 0 & 0\\
0 & 0 & \Omega^{\In}_k(t)^{-1}+\Omega^{\In}_k(t) &  \rule{0mm}{4.5mm} 0&\Omega^{\In}_k(t)^{-1}-\Omega^{\In}_k(t)\\
0 & 0 & 0 & \rule{0mm}{4.5mm}2I_{n_{u}^{(k)}(t)}& 0\\
0 & 0 & \Omega^{\In}_k(t)^{-1}-\Omega^{\In}_k(t) &  \rule{0mm}{4.5mm} 0&\Omega^{\In}_k(t)^{-1}+\Omega^{\In}_k(t)
\end{bmatrix},\]
$L^{(k)}(t)R_2^{(k)}(t+1)^{-1}L^{(k)}(t)^{{T}}=\frac{1}{2}\times$
\[
\begin{bmatrix}
\Gamma^{\Out}_k(t{+}1)^{{-}1}{+}\Gamma^{\Out}_k(t{+}1) & \rule{0mm}{4.5mm}\Gamma^{\Out}_k(t{+}1)^{{-}1}{-}\Gamma^{\Out}_k(t{+}1) & \!\!\!\!\!\!\!\!\!0 & \!\!\!\!\!\!\! 0 & \!\!\!\!\!\!\! 0\\
\Gamma^{\Out}_k(t{+}1)^{{-}1}{-}\Gamma^{\Out}_k(t{+}1) & \rule{0mm}{4.5mm}\Gamma^{\Out}_k(t{+}1)^{{-}1}{+}\Gamma^{\Out}_k(t{+}1) & \!\!\!\!\!\!\!\!\!0 & \!\!\!\!\!\!\! 0 & \!\!\!\!\!\!\! 0\\
0 &  0 &   \!\!\!\!\!\!\!\!\! \Omega^{\Out}_k(t{+}1)^{{-}1}{+}\Omega^{\Out}_k(t{+}1) &  \!\!\!\!\!\!\! \rule{0mm}{4.5mm} 0       & \!\!\!\!\!\!\! \Omega^{\Out}_k(t{+}1)^{{-}1}{-}\Omega^{\Out}_k(t{+}1)\\
0 &  0 & \!\!\!\!\!\!\!\!\! 0 & \!\!\!\!\!\!\! \rule{0mm}{4.5mm}2I_{\!n_{y}^{(k)}\!(t)}  &  \!\!\!\!\!\!\! 0\\
0 &  0 & \!\!\!\!\!\!\!\!\! \Omega^{\Out}_k(t{+}1)^{{-}1}{-}\Omega^{\Out}_k(t{+}1)   &  \!\!\!\!\!\!\! \rule{0mm}{4.5mm} 0       & \!\!\!\!\!\!\! \Omega^{\Out}_k(t{+}1)^{{-}1}{+}\Omega^{\Out}_k(t{+}1)
\end{bmatrix}\!\!{.}\]
For some appropriately defined $N_{12}^{(k)}(t)$ and $N_{21}^{(k)}(t)$, $L^{(k)}(t)K^{(k)}(t)P^{(k)}(t)=\begin{bmatrix}
M^{(k)}(t)& N_{12}^{(k)}(t)\\
N_{21}^{(k)}(t)& \bar{A}_{22}^{(k)}(t)
\end{bmatrix}$, where
\begin{equation*}
M^{(k)}(t)=\left[\begin{array}{cc|c}
\rule{0mm}{5mm}A_r^{(k)}(t)                         & \quad\,\,     0                &\,\,\frac{1}{\sqrt{2}}B_r^{(k)}(t)\\
\rule{0mm}{5mm}      0                      & \quad\,\,A_b^{(k)}(t)           & \,\,\frac{1}{\sqrt{2}}B_b^{(k)}(t)\\
\hline
\rule{0mm}{5mm}\frac{-1}{\sqrt{2}}C_r^{(k)}(t)  &\quad\,\,\frac{1}{\sqrt{2}} C_b^{(k)}(t)& \,\,0
\end{array}\right].
\end{equation*}
The matrices in $M^{(k)}(t)$ can be used to describe the dynamics of the error system $\frac{1}{2}(G-G_r)$. However, the resultant system equations will not be in the form of (\ref{eq:distributedsystemequations}), but can be equivalently expressed in that form through the use of appropriate permutations. Since $\Omega^{(k)}(t)=I$ and $\Omega^{(ij)}(t)=I$, for all $(t,k)\in \mathbb{N}_0\times V$ and $(i,j) \in E$, then it is not difficult to see that
\[
M^{(k)}(t)^T\begin{bmatrix}V_2^{(k)}(t+1)&0 \\0 &I\end{bmatrix}M^{(k)}(t)
-\begin{bmatrix}
V_1^{(k)}(t) & 0\\
0 & I
\end{bmatrix} \prec -\beta I,\]
\begin{align*}
V_2^{(k)}(t+1)&=
\frac{1}{2}\begin{bmatrix}
\rule{0mm}{4.5mm}\left(\Gamma^{\Out}_k(t+1)^{-1}+\Gamma^{\Out}_k(t+1)\right) &  \left(\Gamma^{\Out}_k(t+1)^{-1}-\Gamma^{\Out}_k(t+1)\right)&  0 \\
\rule{0mm}{4.5mm}\left(\Gamma^{\Out}_k(t+1)^{-1}-\Gamma^{\Out}_k(t+1)\right) &  \left(\Gamma^{\Out}_k(t+1)^{-1}+\Gamma^{\Out}_k(t+1)\right)&  0 \\
\rule{0mm}{4.5mm}0&  0&  2I
\end{bmatrix} \succ \beta I,\\
V_1^{(k)}(t)&=
\frac{1}{2}\begin{bmatrix}
\rule{0mm}{4.5mm}\left(\Gamma^{\In}_k(t)^{-1}+\Gamma^{\In}_k(t)\right) & \,\, \left(\Gamma^{\In}_k(t)^{-1}-\Gamma^{\In}_k(t)\right)& \,\, 0 \\
\rule{0mm}{4.5mm}\left(\Gamma^{\In}_k(t)^{-1}-\Gamma^{\In}_k(t)\right) & \,\, \left(\Gamma^{\In}_k(t)^{-1}+\Gamma^{\In}_k(t)\right)& \,\, 0 \\
\rule{0mm}{4.5mm}0& \,\, 0& \,\, 2I
\end{bmatrix} \succ \beta I.
\end{align*}
That is, $V_2^{(k)}(t+1)$ and $V_1^{(k)}(t)$ correspond to the upper left corner blocks of $L^{(k)}(t)R_2^{(k)}(t+1)^{-1}L^{(k)}(t)^T$ and $P^{(k)}(t)^T R_1^{(k)}(t)P^{(k)}(t)$, respectively. Applying the appropriate permutations and invoking Lemma \ref{performancelemma} with $\gamma=1$, we get $\|\frac{1}{2}(G-G_r)\| <1$.
\end{proof}
\vskip 3mm
\begin{thm} \label{errorboundgeneralomega}
The error system $(G-G_r)$ resulting from balanced truncation satisfies
$\|(G-G_r)\|  < 2\zeta(\Omega)$, where $\zeta(X)$ denotes the sum of distinct diagonal entries of a square, possibly infinite dimensional, matrix $X$.
\end{thm}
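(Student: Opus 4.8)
The plan is to reduce Theorem~\ref{errorboundgeneralomega} to the unit-gramian case already established in Theorem~\ref{errorboundomegaequaltoI}, by performing the truncation one gramian level at a time and summing the resulting errors with the triangle inequality, in the spirit of the classical balanced-truncation bound. Before doing so I would dispose of the non-generic situations: if no state is truncated there is nothing to prove, and if $\Omega$ has infinitely many distinct diagonal entries, then—since every diagonal entry of $\Sigma^{(k)}(t)$ and $\Sigma^{(ij)}(t)$, hence every diagonal entry of $\Omega$, exceeds the fixed constant $\beta>0$—the (infinite) sum $\zeta(\Omega)$ diverges and the bound holds trivially. So one may assume $\Omega$ has finitely many distinct diagonal values $\sigma_1>\sigma_2>\cdots>\sigma_L>0$, and $\zeta(\Omega)=\sum_{l=1}^{L}\sigma_l$.

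Next I would construct a chain of intermediate systems $G=G^{(0)},G^{(1)},\dots,G^{(L)}=G_r$, where $G^{(l)}$ is obtained from $G^{(l-1)}$ by applying the balanced truncation of Section~\ref{balancedtruncationmodelreductionsection} to precisely those temporal and spatial states whose associated balanced generalized gramian entry equals $\sigma_l$. By Lemma~\ref{reducedrealizationstableandbalanced}, each $G^{(l)}$ is strongly stable and its realization is balanced, with balanced generalized gramians obtained from those of $G^{(l-1)}$ by deleting the entries equal to $\sigma_l$; note that the construction and Lemma~\ref{reducedrealizationstableandbalanced} place no ordering restriction on the truncated versus the retained entries within a block, so this step is legitimate regardless of the relative sizes of $\sigma_l$ and the entries that survive to $G_r$. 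In particular, at stage $l$ the block of truncated gramian entries is exactly $\sigma_l I$.

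The heart of the argument is the bound $\|G^{(l-1)}-G^{(l)}\|<2\sigma_l$. For this I would rescale: starting from the balanced realization $\bigl(A^{(k)},B^{(k)},C^{(k)},D^{(k)}\bigr)$ of $G^{(l-1)}$, the quadruple $\bigl(A^{(k)},\sigma_l^{-1/2}B^{(k)},\sigma_l^{-1/2}C^{(k)},\sigma_l^{-1}D^{(k)}\bigr)$ realizes $\sigma_l^{-1}G^{(l-1)}$, and reading off (\ref{eq:lyapinequ1}) and (\ref{eq:lyapinequ2}) shows that $\sigma_l^{-1}$ times the balanced gramians of $G^{(l-1)}$ are balanced generalized gramians for it; hence $\sigma_l^{-1}G^{(l-1)}$ is balanced and the stage-$l$ truncated entries are now all equal to $1$. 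Since the submatrix selection defining balanced truncation commutes with this uniform rescaling of $B,C,D$, the balanced truncation of $\sigma_l^{-1}G^{(l-1)}$ at those states is exactly $\sigma_l^{-1}G^{(l)}$, and Theorem~\ref{errorboundomegaequaltoI} (with $\Omega=I$) yields $\|\sigma_l^{-1}G^{(l-1)}-\sigma_l^{-1}G^{(l)}\|<2$; absolute homogeneity of the $\ell_2$-induced norm then gives $\|G^{(l-1)}-G^{(l)}\|<2\sigma_l$. Finally, each $G^{(l)}$ is strongly stable, so the differences are stable and the norm is subadditive, whence $\|G-G_r\|=\|G^{(0)}-G^{(L)}\|\le\sum_{l=1}^{L}\|G^{(l-1)}-G^{(l)}\|<\sum_{l=1}^{L}2\sigma_l=2\zeta(\Omega)$, the last inequality being strict since $L$ is finite and each summand bound is strict.

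The main obstacle I anticipate is the bookkeeping connecting the stages: one must verify that ``truncate the states of gramian value $\sigma_l$ from the balanced system $G^{(l-1)}$'' is genuinely an instance of the construction of Section~\ref{balancedtruncationmodelreductionsection}, so that Lemma~\ref{reducedrealizationstableandbalanced} applies and $G^{(l)}$ is again balanced with the expected gramians, and that forming the error system and rescaling interact cleanly—namely that $\sigma_l^{-1}G^{(l)}$ is literally the balanced truncation of $\sigma_l^{-1}G^{(l-1)}$, and that the permutations needed to cast the error realization in the form (\ref{eq:distributedsystemequations}) do not affect any of these identifications. Once this bookkeeping is settled, the estimate follows from Theorem~\ref{errorboundomegaequaltoI} and the triangle inequality as above.
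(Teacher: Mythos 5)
Your proposal is correct and follows essentially the same route as the paper's own proof: stage the truncation one distinct gramian value at a time (the paper goes smallest-first, you go largest-first, which is immaterial), use Lemma~\ref{reducedrealizationstableandbalanced} to keep each intermediate realization balanced, rescale $B^{(k)}$, $C^{(k)}$, $D^{(k)}$ by powers of $\sigma_l^{-1/2}$ so that the stage's truncated gramian block becomes the identity, invoke Theorem~\ref{errorboundomegaequaltoI}, and conclude by homogeneity of the norm and the triangle inequality. Your added remarks on the case of infinitely many distinct entries and on the irrelevance of the ordering of truncated versus retained entries are sound refinements that the paper leaves implicit.
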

\begin{proof}
As a truncated realization is itself balanced by Lemma \ref{reducedrealizationstableandbalanced}, the truncation procedure can be implemented in multiple steps. For each step $i$, we find the smallest diagonal entry in $\Omega$, which we denote by $q_{i}$, and truncate all the state variables with a corresponding diagonal entry in $\Omega$ equal to $q_{i}$. We then update $\Omega$ by removing all the entries equal to $q_i$. The resulting  reduced order system is denoted by $G_{r,i}$. Suppose that, in the first step, $\Omega^{(k)}(t)=q_1I$ for some $(t,k) \in \mathbb{N}_0\times V$ and/or $\Omega^{(ij)}(t)=q_1I$ for some $(i,j)\in E$ and (perhaps different) $t\in \mathbb{N}_0$. We want to show that $\|(G-G_{r,1})\|<2 q_1$. To do so, we construct a scaled system $G_{\new}$ with realization $(A^{(k)}(t), \frac{1}{\sqrt{q_1}}B^{(k)}(t), \frac{1}{\sqrt{q_1}}C^{(k)}(t), \frac{1}{q_1}D^{(k)}(t))$.

It is not difficult to verify that the corresponding $\Omega_{\new}^{(k)}(t)=I$ and/or $\Omega_{\new}^{(ij)}(t)=I$. Denote by $G_{\new,r,1}$ the reduced order system obtained after applying BT to $G_{\new}$. From Theorem \ref{errorboundomegaequaltoI}, $\|(G_{\new}-G_{\new,r,1})\|<2$. But, $\|(G_{\new}-G_{\new,r,1})\|=\frac{1}{q_1}\|(G-G_{r,1})\|$. And so, $\|(G-G_{r,1})\|<2 q_1$. The same procedure is applied in the following step to obtain  $\|(G_{r,1}-G_{r,2})\|<2 q_2$.  Then, by the triangle inequality, $\|(G-G_{r,2})\|=\|(G-G_{r,1}+G_{r,1}-G_{r,2})\|\le\|(G-G_{r,1})\|+\|(G_{r,1}-G_{r,2})\|< 2(q_1+q_2)$, and so on.
\end{proof}

Theorem \ref{errorboundgeneralomega} gives an upper bound on $\|(G-G_r)\|$, which may not always be finite as there may be infinitely many distinct entries in $\Omega$. Next, we derive a tighter expression for the error bound which applies when the diagonal entries of $\Omega^{(k)}(t)$ and $\Omega^{(ij)}(t)$ define monotonic sequences in time. We define the subsets of time at which truncation occurs as $\mathcal{F}_k=\{t \in \mathbb{N}_0 \,|\, n^{(k)}(t)\neq r^{(k)}(t)\}$ and $\mathcal{F}_{(ij)}=\{t\in\mathbb{N}_0 \,|\, n^{(ij)}(t)\neq r^{(ij)}(t)\}$, for all $k\in V$ and $(i,j)\in E$.  Definition \ref{holdruledefinition} is from \begin{NoHyper}{\cite{farhood2007}}\end{NoHyper}.
\vskip 3mm
\begin{dfn}\label{holdruledefinition}
Consider a scalar sequence $\alpha_t$ which is defined on some subset $\mathcal{W}$ of $\mathbb{N}_0$, and let $t_{\min}=\min\{t\,|\,t\in \mathcal{W}\}$.  We extend the domain of definition of $\alpha_t$ to all $t\in \mathbb{N}_0$ by defining the following rule:
\[
    \alpha_t{=}\left\{
                \begin{array}{ll}
                 \alpha_{t_{\min}} & \mbox{if} \   0\le t   \le t_{\min},\\
                 \alpha_d          & \mbox{if} \  t_{\min} < t, \  \mbox{where } \, d=\max\{\tau\le t\,|\,\tau \in \mathcal{W}\}.
                \end{array}
              \right.
  \]
\end{dfn}
\vskip 3mm
\begin{thm} \label{monotonicerrorbound}
For all $k\in V$ and $(i,j)\in E$, if $\Omega^{(k)}(t)=w^{(k)}(t)I$ for $t\in \mathcal{F}_k$, and  $\Omega^{(ij)}(t)=w^{(ij)}(t)I$ for $t\in \mathcal{F}_{(ij)}$, where the sequences $w^{(k)}(t)$ and $w^{(ij)}(t)$ are monotonic in time, then
\[\|(G-G_r)\|  < 2 \left(\sum_{k\in V} \sup_{t\in \mathcal{F}_{k}} w^{(k)}(t)+\sum_{(i,j)\in E}\sup_{t\in \mathcal{F}_{(ij)}}w^{(ij)}(t)\right).\]
\end{thm}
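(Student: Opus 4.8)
The plan is to refine the multi-step argument of Theorem~\ref{errorboundgeneralomega} by grouping the truncation steps not by the \emph{value} of the gramian entries but by the \emph{vertex or edge} they belong to, and exploiting monotonicity to collapse the contribution of each vertex/edge to a single supremum. Concretely, I would perform the truncation in stages indexed by $V\cup E$: in the stage associated with a vertex $k$, truncate (at all times $t\in\mathcal F_k$) exactly the temporal states of subsystem $G^{(k)}$ that are scheduled for removal, and similarly for each edge $(i,j)$. By Lemma~\ref{reducedrealizationstableandbalanced}, after each stage the intermediate system is again strongly stable and balanced with the surviving gramians, so the procedure is well defined and we may invoke the triangle inequality across stages, exactly as in the proof of Theorem~\ref{errorboundgeneralomega}. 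It then suffices to bound the error contributed by a single stage, say the one removing the temporal states of a fixed vertex $k$, by $2\sup_{t\in\mathcal F_k}w^{(k)}(t)$ (and the analogous statement for an edge).

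The core estimate is therefore: if in one stage we truncate only states of a single vertex $k$ (or edge), with $\Omega^{(k)}(t)=w^{(k)}(t)I$ on $\mathcal F_k$ and $w^{(k)}(t)$ a monotone sequence, then the stage error is at most $2\sup_{t\in\mathcal F_k}w^{(k)}(t)$. Here I would extend $w^{(k)}$ to all of $\mathbb N_0$ by the hold rule of Definition~\ref{holdruledefinition}; monotonicity of the original sequence makes the extension monotone as well, and the extended sequence is bounded (the gramians are uniformly bounded), so $\bar w^{(k)}:=\sup_{t}w^{(k)}(t)=\sup_{t\in\mathcal F_k}w^{(k)}(t)<\infty$. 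The idea is to mimic the scaling trick of Theorem~\ref{errorboundgeneralomega}, but with a \emph{time-varying} scaling $\sqrt{w^{(k)}(t)}$ rather than a constant $\sqrt{q_1}$: define $G_{\new}$ by $A_{\new}^{(\ell)}(t)=A^{(\ell)}(t)$ and rescaling the $B,C,D$ blocks that feed into or out of the states being truncated by the appropriate powers of $w^{(k)}$ evaluated at the correct time index, so that the corresponding truncated gramian block becomes the identity. One checks that $G_{\new}$ is still strongly stable and balanced (the generalized Lyapunov inequalities \eqref{eq:lyapinequ1}--\eqref{eq:lyapinequ2} are invariant under such state-coordinate-independent input/output scalings when the scaling is consistent with the gramian partition), apply Theorem~\ref{errorboundomegaequaltoI} to get $\|G_{\new}-G_{\new,r}\|<2$, and then relate $\|G-G_r\|$ to $\|G_{\new}-G_{\new,r}\|$. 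Because the scaling is time-varying, this last relation is not a clean equality as in Theorem~\ref{errorboundgeneralomega}; instead the de-scaling operator has $\ell_2$-induced norm bounded by $\sup_t w^{(k)}(t)=\bar w^{(k)}$, which is exactly where the supremum — and the need for monotonicity, to make the hold-rule extension behave and to ensure the relevant ratios telescope correctly across the time-step delay between $t$ and $t+1$ — enters. Summing the $|V|+|E|$ stage bounds via the triangle inequality yields the claimed inequality.

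The main obstacle I anticipate is making the time-varying rescaling rigorous: in \eqref{eq:distributedsystemequations} a $B$-block maps $u^{(k)}(t)$ into $x^{(k)}(t+1)$ while a $C$-block reads $x^{(k)}(t)$, so the scaling factors attached to the two sides of a truncated temporal state live at \emph{different} time indices, and one must verify that the hold-rule extension together with monotonicity guarantees that the ratio $w^{(k)}(t+1)/w^{(k)}(t)$ (and its inverse) never forces the de-scaling gain above $\bar w^{(k)}$. This is precisely the point where the analogous single-system argument in \cite{farhood2007} does its delicate bookkeeping, and the interconnected, latency-one setting here reproduces the same structure one vertex/edge at a time; I expect the remaining steps (invariance of strong stability and balancedness under the scaling, the triangle-inequality summation) to be routine given Lemmas~\ref{performancelemma} and~\ref{reducedrealizationstableandbalanced} and Theorem~\ref{errorboundomegaequaltoI}.
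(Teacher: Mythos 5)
Your high-level skeleton matches the paper's: reduce to a single truncated vertex/edge, extend $w^{(k_0)}$ to all of $\mathbb{N}_0$ by the hold rule of Definition~\ref{holdruledefinition}, use monotonicity to handle the mismatch between the time indices $t$ and $t+1$, and sum the stage bounds by the triangle inequality. But the core construction you propose does not work, and the mechanism by which you introduce the supremum is not the right one. You keep $A_{\new}^{(\ell)}(t)=A^{(\ell)}(t)$ and rescale only the $B,C,D$ blocks, asserting that the generalized Lyapunov inequalities are ``invariant under such state-coordinate-independent input/output scalings.'' That is false here: to make the truncated gramian block equal to the identity you must replace $\Sigma^{(k)}(t)$ by $w^{(k_0)}(t)^{-1}\Sigma^{(k)}(t)$, and the homogeneous part $A^{(k)}(t)\,\Sigma(t)\,A^{(k)}(t)^T-\Sigma(t+1)$ of (\ref{eq:lyapinequ1}) is not homogeneous under a \emph{time-varying} scalar rescaling of $\Sigma$ — you pick up a term $\bigl(w^{(k_0)}(t)^{-1}-w^{(k_0)}(t+1)^{-1}\bigr)A\Sigma(t)A^T$ with no negative term available to absorb it. So with $A$ unchanged you cannot simultaneously normalize $\Omega^{(k_0)}$ to $I$ and preserve (\ref{eq:lyapinequ1})--(\ref{eq:lyapinequ2}), and Theorem~\ref{errorboundomegaequaltoI} is not applicable to your $G_{\new}$.

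The fix, which is what the paper actually does, is to realize the time-varying rescaling as a \emph{state coordinate change} $T_{\pre}^{(k)}(t)=w^{(k_0)}(t)^{-1/2}I$, $T_{\post}^{(k)}(t)=w^{(k_0)}(t)^{1/2}I$ applied uniformly to every temporal and spatial state of every subsystem. This necessarily changes $A^{(k)}(t)$ to $\sqrt{w^{(k_0)}(t)/w^{(k_0)}(t+1)}\,A^{(k)}(t)$; then the candidate gramians $w^{(k_0)}(t)^{-1}\Sigma^{(k)}(t)$ satisfy (\ref{eq:lyapinequ1}) exactly (the whole inequality just gets multiplied by $w^{(k_0)}(t+1)^{-1}\ge 1$), and monotonicity together with the normalization $w^{(k_0)}(t)\le 1$ is what lets you verify (\ref{eq:lyapinequ2}). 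Crucially, because this is a similarity transformation that preserves the split between truncated and retained coordinates, $G_{\new}$ and $G_{\new,r}$ have \emph{exactly} the same input--output maps as $G$ and $G_r$ — there is no ``de-scaling operator'' and no extra factor of $\sup_t w^{(k_0)}(t)$ coming from an operator norm. The supremum enters only through the preliminary \emph{constant} normalization (divide by $\bar w^{(k_0)}=\sup_{t\in\mathcal{F}_{k_0}}w^{(k_0)}(t)$ exactly as in the proof of Theorem~\ref{errorboundgeneralomega} so that $w^{(k_0)}(t)\le 1$), which converts the stage bound $<2$ into $<2\bar w^{(k_0)}$. Finally, note that the nonincreasing case requires swapping the exponents in $T_{\pre}$ and $T_{\post}$; a single formula does not cover both monotonicity directions.
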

\begin{proof}
We need to prove the result for the special case where only one temporal state or one spatial state is truncated. The general case considered in the theorem then follows by repeated application of the result for this special case. Namely, we fix $k=k_0$, and assume that the corresponding temporal state is the only truncated state. Without loss of generality, we assume that $w^{(k_0)}(t) \le 1$ for all $t \in \mathcal{F}_{k_0}$, as this can be always achieved by scaling. We extend the domain of $w^{(k_0)}(t)$ to all $t \in \mathbb{N}_0$ using the rule in Definition \ref{holdruledefinition}. Clearly, the extended sequence is still monotonic. We first consider the case where $w^{(k_0)}(t)$ is monotone nondecreasing. For all $(t,k)\in\mathbb{N}_0\times V$, we define the state-space transformations $T_{\pre}^{(k)}(t)=w^{(k_0)}(t)^{-1/2}I$  and $T_{\post}^{(k)}(t)=w^{(k_0)}(t)^{1/2}I$. Note that $T_{\pre}^{(k)}(t)$ is bounded since $\Sigma^{(k_0)}(t)\succ \beta I$, for some $\beta > 0$ and all $t \in \mathbb{N}_0$. Then, we define a new realization $(A_{\new}^{(k)}(t), B_{\new}^{(k)}(t), C_{\new}^{(k)}(t), D^{(k)}(t))$ for system $G$, where
\[
A_{\new}^{(k)}(t)=T_{\pre}^{(k)}(t+1)A^{(k)}(t)T_{\post}^{(k)}(t),\quad \quad B_{\new}^{(k)}(t)=T_{\pre}^{(k)}(t+1)B^{(k)}(t), \quad \,\, \mbox{and} \,\, \quad C_{\new}^{(k)}(t)=C^{(k)}(t)T_{\post}^{(k)}(t).\]

For simplicity, we refer to system $G$ with the new realization as $G_{\new}$. We now show that the realization of $G_{\new}$ is balanced.  Recall that $\Sigma^{(k)}(t)$ and $\Sigma^{(ij)}(t)$ satisfy (\ref{eq:lyapinequ1}) and (\ref{eq:lyapinequ2}). We pre- and post-multiply (\ref{eq:lyapinequ1}) by $T_{\pre}^{(k)}(t+1)$, insert $T_{\post}^{(k)}(t)T_{\post}^{(k)}(t)^{-1}=T_{\post}^{(k)}(t)^{-1}T_{\post}^{(k)}(t)=I$ as needed, and define $\Sigma_{\new}^{(k)}(t)=w^{(k_0)}(t)^{-1}\Sigma^{(k)}(t)$ and $\Sigma_{\new}^{(ij)}(t)=w^{(k_0)}(t)^{-1}\Sigma^{(ij)}(t)$, to get
\[A_{\new}^{(k)}(t)\begin{bmatrix}
\Sigma_{\new}^{(k)}(t)&0\\
0&\Sigma_{\new,\In}^{(k)}(t)
\end{bmatrix}A_{\new}^{(k)}(t)^T
-\begin{bmatrix}
\Sigma_{\new}^{(k)}(t+1)&0\\
0&\Sigma_{\new,\Out}^{(k)}(t+1)
\end{bmatrix}+B_{\new}^{(k)}(t)B_{\new}^{(k)}(t)^T\prec -\beta I.\]
We also pre- and post-multiply (\ref{eq:lyapinequ2}) by $T_{\post}^{(k)}(t)$ and insert $T_{\pre}^{(k)}(t+1)^{-1}T_{\pre}^{(k)}(t+1)=T_{\pre}^{(k)}(t+1)T_{\pre}^{(k)}(t+1)^{-1}=I$ to get
\[
w^{(k_0)}(t+1)A_{\new}^{(k)}(t)^T\begin{bmatrix}
\Sigma^{(k)}(t+1)&\!\!\!0\\
0&\!\!\!\Sigma_{\Out}^{(k)}(t+1)
\end{bmatrix}A_{\new}^{(k)}(t)
-w^{(k_0)}(t)\begin{bmatrix}
\Sigma^{(k)}(t)&\!\!\!0\\
0&\!\!\!\Sigma_{\In}^{(k)}(t)
\end{bmatrix}+C_{\new}^{(k)}(t)^TC_{\new}^{(k)}(t)\prec -\beta I.\]
Since  $0<w^{(k_0)}(t)\le 1$, then $w^{(k_0)}(t)^{-1}\ge w^{(k_0)}(t)$. Also, since $w^{(k_0)}(t)$ is monotone nondecreasing, then $w^{(k_0)}(t)\le w^{(k_0)}(t{+}1)$ and $w^{(k_0)}(t)^{-1}\ge w^{(k_0)}(t{+}1)^{-1}{.}$ With this in mind, it is not difficult to verify that
\begin{multline*}
w^{(k_0)}(t+1)^{-1}A_{\new}^{(k)}(t)^T
\begin{bmatrix}
\Sigma^{(k)}(t+1)&0\\
0&\Sigma_{\Out}^{(k)}(t+1)
\end{bmatrix}A_{\new}^{(k)}(t)\\
-w^{(k_0)}(t)^{-1}\begin{bmatrix}
\Sigma^{(k)}(t)&0\\
0&\Sigma_{\In}^{(k)}(t)
\end{bmatrix}+C_{\new}^{(k)}(t)^TC_{\new}^{(k)}(t)\prec -\beta I.
\end{multline*}

Thus, $\Sigma_{\new}^{(k)}(t)$ and $\Sigma_{\new}^{(ij)}(t)$ satisfy $(\ref{eq:lyapinequ1})$ and $(\ref{eq:lyapinequ2})$ for the realization of $G_{\new}$, i.e., the realization of $G_{\new}$ is balanced and can be reduced by BT. We denote by $G_{\new,r}$ the reduced order system obtained by truncating the temporal state of subsystem $G_{\new}^{(k_0)}$. Since $\Omega^{(k_0)}(t)=w^{(k_0)}(t)I$, then $\Omega_{\new}^{(k_0)}(t)=I$  for $t\in \mathcal{F}_{k_0}$. By Theorem \ref{errorboundomegaequaltoI}, $\|(G_{\new}-G_{\new,r})\|<2$. But, because of the special structure of $T_{\post}^{(k)}(t)$ and $T_{\pre}^{(k)}(t)$, system $(G_{\new}-G_{\new,r})$ is equivalent to $(G-G_r)$, and so, $\|(G-G_{r})\|<2$. A similar argument holds for the case where $w^{(k_0)}(t)$ is monotone nonincreasing, with the state-space transformations defined as $T_{\pre}^{(k)}(t)=w^{(k_0)}(t)^{1/2}I$  and $T_{\post}^{(k)}(t)=w^{(k_0)}(t)^{-1/2}I$, for all $(t,k)\in\mathbb{N}_0\times V$.
\end{proof}

For example, suppose that the only truncated state is $x^{(k_0)}$, the truncation only occurs at three time-steps $t_1$, $t_2$, $t_3$, and  $\Omega^{(k_0)}(t_1)=\diag(7,3,2)$, $\Omega^{(k_0)}(t_2)=\diag(4,2,2)$, $\Omega^{(k_0)}(t_3)=\diag(6,5,2)$. The bound from Theorem \ref{errorboundgeneralomega}  gives $2\times(7+6+5+4+3+2)=54$, whereas the bound from Theorem \ref{monotonicerrorbound} gives $2\times(2+5+7)=28$. As can be seen below, in the latter case, the truncation sequences are  $\{2,\diag(2,2),2\}$, $\{3,4,5\}$, and $\{7,6\}$. That is, we first truncate the terms in squares, then we truncate the circled terms, and finally we truncate the terms in triangles. In the first truncation sequence, all truncated entries are equal to $2$. The second and third truncation sequences are monotone (increasing and decreasing, respectively) with the largest entries equal to $5$ and $7$. This gives the error bound $2\times(2+5+7)=28$ computed above.
\[
\Omega^{(k_0)}(t_1)=\begin{bmatrix}\triangled{7}&0&0\\0&\circled{3}&0\\0&0&\boxed{2}\end{bmatrix},\quad \quad
\Omega^{(k_0)}(t_2)=\begin{bmatrix}\circled{4}&0&0\\0&\boxed{2}&0\\0&0&\boxed{2}\end{bmatrix},    \quad \quad
\Omega^{(k_0)}(t_3)=\begin{bmatrix}\triangled{6}&0&0\\0&\circled{5}&0\\0&0&\boxed{2}\end{bmatrix}.\]

\section{Eventually Time-Periodic Systems}\label{ETPsection}
In this section, we deal with eventually time-periodic (ETP) subsystems, i.e., subsystems with state-space matrices which become time-periodic after some initial amount of time, and show that the bound given in Theorem \ref{errorboundgeneralomega} reduces to a finite sum. A distributed system $G$ is said to be $(h,q)$-ETP, for some integers $h \geq 0$ and $q >0$, if the state-space matrices of the subsystems are $(h,q)$-ETP, e.g., $A^{(k)}(t+h+zq)=A^{(k)}(t+h)$, for all $t,z \in \mathbb{N}_0$ and $k\in V$. The class of ETP subsystems includes as special cases time-periodic subsystems ($(0,q)$-ETP) and finite time-horizon subsystems ($(h,1)$-ETP with zero state-space matrices for $t\geq h$).
\vskip 3mm
\begin{lmm} \label{periodicsolutionstostability}
For a strongly stable, $q$ time-periodic system G, there exist $q$ time-periodic solutions to (\ref{eq:strongstabilitycondition}).
\end{lmm}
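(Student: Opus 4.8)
The plan is to show that strong stability of a $q$-time-periodic system, which a priori only guarantees the existence of \emph{some} uniformly bounded structured solution $X^{(k)}(t)$, $X^{(ij)}(t)$ to (\ref{eq:strongstabilitycondition}), actually forces the existence of a $q$-periodic such solution. The natural device is an averaging (or ``folding'') construction: given any uniformly bounded solution, produce a periodic one by exploiting the periodicity of the coefficient matrices together with the convexity/linearity of the inequality (\ref{eq:strongstabilitycondition}) in the unknowns $X$.

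First I would fix a uniformly bounded solution $X^{(k)}(t)\succ\beta I$, $X^{(ij)}(t)\succ\beta I$ of (\ref{eq:strongstabilitycondition}), which exists by definition of strong stability, and I would shift it in time: for any $z\in\mathbb{N}_0$, the sequences $t\mapsto X^{(k)}(t+zq)$ and $t\mapsto X^{(ij)}(t+zq)$ also solve (\ref{eq:strongstabilitycondition}), because $A^{(k)}(t+zq)=A^{(k)}(t)$ by $q$-periodicity, and the ``in''/``out'' block-diagonal augmentations are built in the same structured way at every time. Since all these shifted solutions are uniformly bounded by the same constant (the original solution is uniformly bounded and $\beta$ is fixed), one can extract from $\{X^{(k)}(t+zq)\}_{z}$ a subsequence converging (entrywise, for each fixed $t\in\{0,\dots,q-1\}$, by a diagonal argument over the finitely many vertices and edges) to limits $\tilde X^{(k)}(t)$, $\tilde X^{(ij)}(t)$; extend these $q$-periodically. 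Because (\ref{eq:strongstabilitycondition}) is a closed condition that is preserved under limits only up to non-strict inequality, I would pass to the limit in the \emph{non-strict} version of (\ref{eq:strongstabilitycondition}) with the same $\beta$ (the original solution satisfies it with a strict $-\beta I$, hence the limit satisfies it with $\preceq -\beta I$ and $\tilde X\succeq\beta I$), and then recover strictness by choosing a slightly smaller $\beta'\in(0,\beta)$, so that $\tilde X^{(k)}(t)\succ\beta' I$, $\tilde X^{(ij)}(t)\succ\beta' I$ and the left-hand side is $\prec -\beta' I$. This yields the desired $q$-periodic solution.

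An alternative, more elementary route that avoids compactness: average over one period. Because the shifted sequences $X^{(k)}(t+zq)$ for $z=0,1,\dots,Z-1$ each solve the (linear) inequality (\ref{eq:strongstabilitycondition}) for the same periodic data, their arithmetic mean $\frac1Z\sum_{z=0}^{Z-1}X^{(k)}(t+zq)$ does too, with the same $\beta$ and the same uniform bounds; but this mean need not be periodic. The cleaner variant is a Cesàro/telescoping argument: define $\bar X^{(k)}(t)=\lim_{Z\to\infty}\frac1Z\sum_{z=0}^{Z-1}X^{(k)}(t+zq)$ if the limit exists, or pass to a Banach limit over $z$ to guarantee existence; a Banach limit is linear, positive, shift-invariant in $z$, and maps bounded sequences to finite values, so $\bar X$ inherits $\bar X^{(k)}(t)\succeq\beta I$, solves the non-strict (\ref{eq:strongstabilitycondition}) with the same $\beta$, and — by shift-invariance in $z$ — satisfies $\bar X^{(k)}(t+q)=\bar X^{(k)}(t)$, i.e., it is genuinely $q$-periodic. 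Strictness is then restored as above by shrinking $\beta$ slightly. Either the subsequential-limit or the Banach-limit version works; I would present whichever the surrounding text of the paper has set up machinery for (the paper's later Theorem~\ref{eventuallyperiodicsolutionstostability} presumably reuses the same trick).

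The main obstacle is the mismatch between strict and non-strict inequalities under limits: (\ref{eq:strongstabilitycondition}) as stated demands a uniform strict gap $-\beta I$, and naive limiting only preserves $\preceq -\beta I$. The resolution — observing that a strict solution with margin $\beta$ is in particular a non-strict solution with margin $\beta$, taking the limit there, and then declaring victory with any $\beta'<\beta$ — is routine but must be stated carefully, especially to also keep $\tilde X^{(k)}(t)\succ\beta' I$ and $\tilde X^{(ij)}(t)\succ\beta' I$ and to confirm uniform boundedness of the limit (immediate, since it is a limit/average of sequences sharing one uniform bound). A secondary point worth a sentence is that the block-diagonal \emph{structure} of the solution (the constraints $X_{\In}^{(k)}(t)=\diag(X^{(ik)}(t))_{i\in E_{\In}^{(k)}}$, etc.) is linear and closed, hence preserved by both averaging and limits, so the periodic solution is still an admissible structured solution of the form required by Lemma~\ref{strongstabilitylemma}.
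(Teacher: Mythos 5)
Your second route --- Ces\`aro averaging over the shifts by $q$, followed by a limit --- is essentially the paper's proof. The paper forms $Y_{\lambda}^{(k)}(t)=\frac{1}{\lambda}\sum_{z=0}^{\lambda-1}W^{(k)}(t+zq)$, uses the periodicity $A^{(k)}(t+zq)=A^{(k)}(t)$ together with convexity of (\ref{eq:strongstabilitycondition}) in the unknowns to conclude that each average is again a structured solution with the same margin $\beta$, extracts a convergent subsequence in $\lambda$ (possible by uniform boundedness), and obtains $q$-periodicity of the limit from the telescoping identity $L^{(k)}(t+q)-L^{(k)}(t)=\lim_{\lambda_c\to\infty}\frac{1}{\lambda_c}\left(W^{(k)}(t+\lambda_c q)-W^{(k)}(t)\right)=0$. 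Your Banach-limit variant packages the same shift-invariance argument differently and is equally valid; your attention to the strict-versus-nonstrict issue in the limit and to the preservation of the block-diagonal structure is appropriate (the paper glosses over the former).

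However, your first route --- extracting a pointwise convergent subsequence of the \emph{unaveraged} shifts $X^{(k)}(t+z_mq)$ for $t\in\{0,\ldots,q-1\}$ and then extending periodically --- does not work, so you should not assert that ``either version works.'' The inequality (\ref{eq:strongstabilitycondition}) evaluated at time $q-1+z_mq$ couples $X^{(k)}(q-1+z_mq)$ with $X^{(k)}(q+z_mq)=X^{(k)}((z_m+1)q)$, i.e., it couples the shift by $z_m$ with the shift by $z_m+1$. Along your subsequence you control $\lim_m X^{(k)}(z_mq)=\tilde X^{(k)}(0)$, but you have no control over $\lim_m X^{(k)}((z_m+1)q)$, which need not exist and, even if it does, need not equal $\tilde X^{(k)}(0)$. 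Hence the periodic extension $\tilde X^{(k)}(q):=\tilde X^{(k)}(0)$ cannot be substituted into the limiting inequality at $t=q-1$: a subsequential limit of shifts solves the inequalities on all of $\mathbb{N}_0$ but is not guaranteed to be periodic. This wrap-around coupling is precisely what the averaging (equivalently, the shift-invariance of a Banach limit) is there to absorb, so the averaging step is not an optional alternative but the essential ingredient of the argument.
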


\begin{proof}
The proof can be found in \begin{NoHyper}{\cite{DAJMFCDC15}}\end{NoHyper}, and is included here for the sake of completion. Since G is strongly stable, then there exist solutions to (\ref{eq:strongstabilitycondition}), which we denote by $W^{(k)}(t)$ and $W^{(ij)}(t)$. From these solutions, we construct $q$ time-periodic solutions, which we denote $W^{(k)}_{\per}(t)$ and $W^{(ij)}_{\per}(t)$. To do so, we resort to averaging techniques similar to the ones used in \begin{NoHyper}{\cite{geir1999}}\end{NoHyper} and \begin{NoHyper}{\cite{farhood2012}}\end{NoHyper}. Since the distributed system is $q$ time-periodic, then $A^{(k)}(t+zq)=A^{(k)}(t)$ for all $t, z\in \mathbb{N}_0$ and $k\in V$. We fix $t$ in $\mathbb{N}_0$, choose an integer $\lambda \ge 1$, and evaluate (\ref{eq:strongstabilitycondition}) at $(t+zq,k)$ for $z=0,\ldots,\lambda-1$. Averaging the resulting inequalities, we obtain
\[
A^{(k)}(t)^T \begin{bmatrix}
Y_{\lambda}^{(k)}(t+1)& 0\\
0 &Y_{\Out,\lambda}^{(k)}(t+1)
\end{bmatrix}A^{(k)}(t)
-\begin{bmatrix}
Y_{\lambda}^{(k)}(t)& 0\\
0 & Y_{\In,\lambda}^{(k)}(t)
\end{bmatrix} \prec -\beta I,
\]
where $Y_{\lambda}^{(k)}(t)=\frac{1}{\lambda}\sum_{z = 0}^{\lambda -1}W^{(k)}(t+zq)$. $Y_{\lambda}^{(ij)}(t)$ are defined similarly. Since the solutions to (\ref{eq:strongstabilitycondition}) are uniformly bounded, then so are $Y_{\lambda}^{(k)}(t)$ and $Y_{\lambda}^{(ij)}(t)$. Then, there exist weakly convergent subsequences $Y_{\lambda_c}^{(k)}(t)$ and $Y_{\lambda_c}^{(ij)}(t)$ with limits $L^{(k)}(t)$ and $L^{(ij)}(t)$, respectively. The reader is referred to \begin{NoHyper}{\cite{kubrusly2011}}\end{NoHyper} for further details on convergence in weak topology. By construction, the limits are positive definite and satisfy $L^{(k)}(t) \succ \beta I$ and $L^{(ij)}(t)\succ \beta I$. It remains to be shown that they are $q$ time-periodic. This is done for $L^{(k)}(t)$. The proof for $L^{(ij)}(t)$ follows similarly.
\begin{align*}
L^{(k)}(t+q)-L^{(k)}(t)&=\lim_{\lambda_c \rightarrow \infty}\frac{1}{\lambda_c}\sum_{z = 0}^{\lambda_c -1}\left(W^{(k)}(t+(z+1)q)-W^{(k)}(t+zq)\right)\\
&=\lim_{\lambda_c \rightarrow \infty}\frac{1}{\lambda_c}\left( W^{(k)}(t+\lambda_c q)-W^{(k)}(t)\right)=0.
\end{align*}
We set $W^{(k)}_{\per}(t)=L^{(k)}(t)$ and $W^{(ij)}_{\per}(t)=L^{(ij)}(t)$, for all $(t,k)\in \mathbb{N}_0\times V$ and $(i,j) \in E$.
\end{proof}
\vskip 3mm
\begin{thm} \label{eventuallyperiodicsolutionstostability}
For a strongly stable, $(h,q)$-ETP system $G$, there exist $(h,q)$-ETP solutions  $X^{(k)}_{\eper}(t)$, $X^{(ij)}_{\eper}(t)$ and $Y^{(k)}_{\eper}(t)$, $Y^{(ij)}_{\eper}(t)$ to (\ref{eq:lyapinequ1}) and (\ref{eq:lyapinequ2}), respectively, for all $(t,k)\in \mathbb{N}_0\times V$ and $(i,j) \in E$.
\end{thm}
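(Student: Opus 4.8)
The goal is to produce $(h,q)$-ETP solutions to the generalized Lyapunov inequalities~(\ref{eq:lyapinequ1}) and~(\ref{eq:lyapinequ2}) for a strongly stable $(h,q)$-ETP system. The plan is to first reduce the problem for~(\ref{eq:lyapinequ1}) and~(\ref{eq:lyapinequ2}) to the problem for the stability condition~(\ref{eq:strongstabilitycondition}), and then to handle the eventually time-periodic structure by splitting the time axis into the transient part $t<h$ and the periodic part $t\geq h$, applying Lemma~\ref{periodicsolutionstostability} on the periodic tail and gluing in a finite transient piece. I would begin by establishing the claimed equivalence between solvability of~(\ref{eq:strongstabilitycondition}) and of each of~(\ref{eq:lyapinequ1})--(\ref{eq:lyapinequ2}): observe that~(\ref{eq:lyapinequ2}) is, up to relabeling, the same type of inequality as~(\ref{eq:strongstabilitycondition}) augmented with the term $C^{(k)}(t)^TC^{(k)}(t)\succeq 0$, so any structured solution of~(\ref{eq:strongstabilitycondition}) can be scaled up (multiply by a large enough constant, or add a suitable positive perturbation) to dominate the bounded $C^TC$ term and still satisfy a strict inequality with a possibly smaller $\beta$; conversely dropping $C^TC$ preserves the strict inequality. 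For~(\ref{eq:lyapinequ1}), note it is the stability condition for the adjoint/transposed system $(A^{(k)}(t)^T,\ldots)$ with the roles of in and out channels swapped, so strong stability of $G$ (which, by a standard duality argument as in~\cite{dullerud2004,farhood2015}, is equivalent to strong stability of the adjoint) yields its structured solutions. Thus it suffices to prove the theorem for~(\ref{eq:strongstabilitycondition}) alone, and then transfer.

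Next I would carry out the ETP construction for~(\ref{eq:strongstabilitycondition}). Since $G$ is strongly stable, fix any uniformly bounded structured solution $W^{(k)}(t)$, $W^{(ij)}(t)$ with parameter $\beta>0$. Consider the shifted system $\tilde G$ whose state-space matrices are $\tilde A^{(k)}(t)=A^{(k)}(t+h)$, etc.; this system is exactly $q$ time-periodic, and $\{W^{(k)}(t+h)\}$, $\{W^{(ij)}(t+h)\}$ solve its stability condition, so $\tilde G$ is strongly stable. By Lemma~\ref{periodicsolutionstostability} there exist $q$ time-periodic solutions $\tilde W^{(k)}_{\per}(t)$, $\tilde W^{(ij)}_{\per}(t)$ to~(\ref{eq:strongstabilitycondition}) for $\tilde G$, valid for all $t\in\mathbb{N}_0$, with some $\tilde\beta>0$. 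Now define the candidate ETP solution by using these periodic terms on the tail, $X^{(k)}_{\eper}(t)=\tilde W^{(k)}_{\per}(t-h)$ for $t\geq h$ (and similarly for the spatial terms), and by solving the finitely many inequalities indexed by $t=0,1,\ldots,h-1$ backward in time on the transient. For the backward step: the inequality at time $t$ constrains $X^{(k)}_{\eper}(t)$ and $X^{(ij)}_{\eper}(t)$ given $X^{(k)}_{\eper}(t+1)$ and $X^{(ij)}_{\eper}(t+1)$; since the "out" term at $t+1$ is already fixed (inductively, or from the tail at $t+1=h$) and the whole left-hand side is a continuous function of $X^{(k)}_{\eper}(t)\succ0$, $X^{(ij)}_{\eper}(t)\succ0$ that becomes arbitrarily negative definite as these are scaled up, one can choose them large enough (and bounded, since there are only finitely many steps) to make~(\ref{eq:strongstabilitycondition}) hold strictly with a uniform $\beta'>0$. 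This yields an $(h,q)$-ETP solution to the stability condition, and then the equivalence from the first paragraph promotes it to $(h,q)$-ETP solutions $X^{(k)}_{\eper}$, $X^{(ij)}_{\eper}$ of~(\ref{eq:lyapinequ1}) and $Y^{(k)}_{\eper}$, $Y^{(ij)}_{\eper}$ of~(\ref{eq:lyapinequ2}).

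The main obstacle I anticipate is the coupling of the spatial terms across subsystems, which makes the "choose the transient terms large enough" argument less routine than for a single system. At each transient time $t$, the inequality for subsystem $k$ involves $X^{(ik)}_{\eper}(t)$ for $i\in E_{\In}^{(k)}$ and $X^{(kj)}_{\eper}(t+1)$ for $j\in E_{\Out}^{(k)}$, and every edge state $X^{(ij)}$ appears in exactly two subsystem inequalities (as an in-term for $j$ at time $t$ and an out-term for $i$ at time $t+1$). One must verify that a single common choice of the shared spatial variables simultaneously satisfies all $N$ inequalities at that time-step; the clean way is to scale all transient temporal and spatial terms by a common large scalar $\rho$ — since the out-terms at $t+1$ enter~(\ref{eq:strongstabilitycondition}) with a $+$ sign through $A^{(k)}(t)^T\diag(\cdot)A^{(k)}(t)$ and the in-terms at $t$ with a $-$ sign, uniform scaling by $\rho$ multiplies the whole left-hand side by $\rho$, preserving strictness once $\rho\geq 1$, provided the tail value at $t=h$ is reconciled (which can be arranged by also scaling the periodic tail solution, as the stability condition is homogeneous of degree one in the gramians). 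A secondary subtlety is bookkeeping the $\beta$'s: the averaging in Lemma~\ref{periodicsolutionstostability} and the scalings here each only need the existence of \emph{some} positive $\beta$, and taking the minimum over the finitely many transient steps and the periodic parameter gives a uniform one, so uniform positive definiteness and uniform boundedness are retained throughout.
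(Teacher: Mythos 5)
Your proposal is correct and follows essentially the same strategy as the paper: reduce everything to the stability condition (\ref{eq:strongstabilitycondition}), apply Lemma~\ref{periodicsolutionstostability} to the $q$-periodic tail, glue in the finite transient, and then recover (\ref{eq:lyapinequ1}) and (\ref{eq:lyapinequ2}) by inversion/duality and scaling. The two local differences are worth noting. First, in the gluing step you discard the original transient solution and rebuild it by backward induction, scaling the transient terms up so that they dominate the (possibly larger) periodic tail at the boundary $t=h-1$; the paper instead scales the periodic tail \emph{down} by a single scalar $\alpha>0$ chosen so that $\alpha W^{(k)}_{\per}(h)\prec W^{(k)}(h)$ and $\alpha W^{(ij)}_{\per}(h)\prec W^{(ij)}(h)$, whereupon the original transient values $W^{(k)}(t)$, $W^{(ij)}(t)$ for $t<h$ remain valid unchanged (the altered term enters the boundary LMI with a plus sign and has only decreased). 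The paper's variant is slightly more economical since it needs only one boundary inequality to be rechecked and entirely avoids the coupling bookkeeping you correctly identify and resolve for the shared spatial terms; your one-shot remark about scaling the transient by a common $\rho$ works provided the tail is held fixed (scaling the tail as well would just rescale a homogeneous inequality and buy nothing), but your backward-induction argument is sound on its own. Second, where you invoke adjoint-system duality for (\ref{eq:lyapinequ1}), the paper makes this concrete by applying the Schur complement twice to (\ref{eq:strongstabilitycondition}) to show that $\xi\,W^{(k)}_{\eper}(t)^{-1}$, $\xi\,W^{(ij)}_{\eper}(t)^{-1}$ solve (\ref{eq:lyapinequ1}) for suitable $\xi>0$ (with uniform boundedness of the inverses guaranteed by $W_{\eper}\succ\beta I$); this is the same mechanism, just made explicit, and it manifestly preserves the $(h,q)$-ETP property, as does the scaling $\mu\,W_{\eper}$ used for (\ref{eq:lyapinequ2}).
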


\begin{proof}
Since $G$ is strongly stable, then there exist solutions to (\ref{eq:strongstabilitycondition}), which we denote by $W^{(k)}(t)$ and $W^{(ij)}(t)$. From these solutions, we construct $(h,q)$-ETP solutions to (\ref{eq:strongstabilitycondition}) as shown next. We will only need the LMIs that correspond to $t \ge h-1$. So, without loss of generality, we assume  $h=1$. For all $k \in V$, we have
\begin{align}
&A^{(k)}(0)^{{T}}\begin{bmatrix}
W^{(k)}(1)&0\\
0&W_{\Out}^{(k)}(1)
\end{bmatrix}A^{(k)}(0) - \begin{bmatrix}
W^{(k)}(0)&0\\
0&W_{\In}^{(k)}(0)\end{bmatrix}\prec -\beta I, \nonumber\\
&A^{(k)}(t)^T \begin{bmatrix}
W^{(k)}(t+1)&0\\
0&W_{\Out}^{(k)}(t+1)
\end{bmatrix}A^{(k)}(t)- \begin{bmatrix}
W^{(k)}(t)&0\\
0&W_{\In}^{(k)}(t)\end{bmatrix} \prec -\beta I, \quad \quad t \ge 1. \label{eq:tobeperiodicportion}
\end{align}
By Lemma \ref{periodicsolutionstostability}, there exist $q$ time-periodic solutions to (\ref{eq:tobeperiodicportion}), denoted by $W^{(k)}_{\per}(t)$ and $W^{(ij)}_{\per}(t)$. We can always choose $\alpha>0$ such that $\alpha \, W^{(k)}_{\per}(1)\prec  W^{(k)}(1)$ and $\alpha \, W^{(ij)}_{\per}(1) \prec W^{(ij)}(1)$. Then, the following holds
\[
A^{(k)}(0)^T \begin{bmatrix}
\alpha W^{(k)}_{\per}(1)&0\\
0&\alpha W^{(k)}_{\Out,\per}(1)\end{bmatrix}A^{(k)}(0)
- \begin{bmatrix}
W^{(k)}(0)&0\\
0&W_{\In}^{(k)}(0)
\end{bmatrix} \prec -\beta I.\]
We set $W^{(k)}_{\eper}(0)=W^{(k)}(0)$ and $W^{(k)}_{\eper}(t)=\alpha \, W^{(k)}_{\per}(t)$ for $t\ge 1$. $W^{(ij)}_{\eper}(t)$ are defined similarly.

Since $W^{(k)}_{\eper}(t)$ and $W^{(ij)}_{\eper}(t)$ are $(h,q)$-ETP solutions to (\ref{eq:strongstabilitycondition}), we can choose $\mu>0$ such that $Y^{(k)}_{\eper}(t)=\mu \, W^{(k)}_{\eper}(t) $ and $Y^{(ij)}_{\eper}(t)=\mu \,W^{(ij)}_{\eper}(t)$ are solutions to (\ref{eq:lyapinequ2}). Also, by applying the Schur complement formula twice to (\ref{eq:strongstabilitycondition}), we see that we can choose $\xi >0$ such that $X^{(k)}_{\eper}(t)=\xi \, W^{(k)}_{\eper}(t)^{-1} $ and $X^{(ij)}_{\eper}(t)=\xi \,W^{(ij)}_{\eper}(t)^{-1}$ are solutions to (\ref{eq:lyapinequ1}).
\end{proof}

\vskip 3mm
\begin{crly}\label{eventuallyperiodicerrorbound}
A strongly stable, $(h,q)$-ETP system $G$ has an $(h,q)$-ETP balanced realization and $(h,q)$-ETP generalized gramians   $\Sigma_{\eper}^{(k)}(t)=\diag(\Gamma_{\eper}^{(k)}(t),\Omega_{\eper}^{(k)}(t))$ and $\Sigma_{\eper}^{(ij)}(t)=\diag(\Gamma_{\eper}^{(ij)}(t),\Omega_{\eper}^{(ij)}(t))$. Moreover, system $G_r$, resulting from BT, has an $(h,q)$-ETP balanced realization and satisfies $\|(G-G_r)\|  < 2\zeta(\Omega_{\eper})$, where $\Omega_{\eper}$ is the $(h,q)$-truncation of $\Omega$ defined in Section \ref{errorboundssection}, i.e., $\Omega_{\eper}=\diag(\bar{\Omega}_{\eper}(t))_{t\in\{0,1,\ldots,h+q-1\}}$.
\end{crly}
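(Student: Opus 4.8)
The plan is to chain together Theorem \ref{eventuallyperiodicsolutionstostability}, Algorithm \ref{algorithm}, Lemma \ref{reducedrealizationstableandbalanced}, and Theorem \ref{errorboundgeneralomega}, checking at each stage that the $(h,q)$-ETP structure survives. First, by Theorem \ref{eventuallyperiodicsolutionstostability}, a strongly stable $(h,q)$-ETP system $G$ admits $(h,q)$-ETP generalized gramians $X^{(k)}_{\eper}(t), X^{(ij)}_{\eper}(t)$ and $Y^{(k)}_{\eper}(t), Y^{(ij)}_{\eper}(t)$ solving (\ref{eq:lyapinequ1}) and (\ref{eq:lyapinequ2}). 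I would feed these into Algorithm \ref{algorithm}. The key observation is that although the Cholesky and singular value decompositions used there are not unique, one is free to choose them; since the data $X^{(k)}_{\eper}(t)$, $Y^{(k)}_{\eper}(t)$, $A^{(k)}(t)$, etc.\ are $(h,q)$-ETP, one can choose the factorizations so that $R^{(k)}(t), H^{(k)}(t), U^{(k)}(t), \Sigma^{(k)}(t), V^{(k)}(t)$ and their spatial analogues are $(h,q)$-ETP---concretely, compute them for $t\in\{0,\ldots,h+q-1\}$ and extend periodically. Consequently the balancing transformations $T^{(k)}(t)$, $T^{(k)}(t)^{-1}$, and hence $T_{\pre}^{(k)}(t)$, $T_{\post}^{(k)}(t)$, are $(h,q)$-ETP, and since a product of $(h,q)$-ETP sequences (allowing a fixed time-shift, which preserves eventual periodicity) is again $(h,q)$-ETP, the realization $(A_{\bal}^{(k)}(t), B_{\bal}^{(k)}(t), C_{\bal}^{(k)}(t), D^{(k)}(t))$ is $(h,q)$-ETP with $(h,q)$-ETP balanced generalized gramians $\Sigma_{\eper}^{(k)}(t)$, $\Sigma_{\eper}^{(ij)}(t)$.

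Next, I would order the diagonal entries of each $\Sigma_{\eper}^{(k)}(t)$, $\Sigma_{\eper}^{(ij)}(t)$ decreasingly, consistently for $t\in\{0,\ldots,h+q-1\}$ and extended periodically, and choose the truncation dimensions $r^{(k)}(t)$, $r^{(ij)}(t)$ to be $(h,q)$-ETP. Then the partitions $\Sigma_{\eper}^{(k)}(t)=\diag(\Gamma_{\eper}^{(k)}(t),\Omega_{\eper}^{(k)}(t))$ and $\Sigma_{\eper}^{(ij)}(t)=\diag(\Gamma_{\eper}^{(ij)}(t),\Omega_{\eper}^{(ij)}(t))$ are $(h,q)$-ETP. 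Applying the BT construction of Section \ref{balancedtruncationmodelreductionsection} together with Lemma \ref{reducedrealizationstableandbalanced}, the reduced system $G_r$ is strongly stable with balanced realization $(A_r^{(k)}(t),B_r^{(k)}(t),C_r^{(k)}(t),D^{(k)}(t))$ and balanced generalized gramians $\Gamma_{\eper}^{(k)}(t)$, $\Gamma_{\eper}^{(ij)}(t)$; because the permutations, partitioning, and truncation are performed on $(h,q)$-ETP data, this realization is itself $(h,q)$-ETP.

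Finally, Theorem \ref{errorboundgeneralomega} gives $\|(G-G_r)\| < 2\zeta(\Omega)$ with $\Omega=\diag(\bar{\Omega}(t))_{t\in\mathbb{N}_0}$. It remains to argue $\zeta(\Omega)=\zeta(\Omega_{\eper})$, i.e., that no new distinct diagonal entry appears past time $h+q-1$. Indeed, since $\Omega_{\eper}^{(k)}(t)$ and $\Omega_{\eper}^{(ij)}(t)$ are $(h,q)$-ETP, for $t\ge h$ the block $\bar{\Omega}(t)$ equals $\bar{\Omega}(t-zq)$ for the $z$ bringing the argument into $\{h,\ldots,h+q-1\}$; hence the set of diagonal entries of $\Omega$ coincides with that of $\Omega_{\eper}=\diag(\bar{\Omega}_{\eper}(t))_{t\in\{0,\ldots,h+q-1\}}$, a finite-dimensional matrix, so $\zeta(\Omega)=\zeta(\Omega_{\eper})<\infty$ and $\|(G-G_r)\|<2\zeta(\Omega_{\eper})$.

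The main obstacle is the first step: verifying that the balanced realization can be built to be $(h,q)$-ETP, which hinges on forcing the non-unique factorizations in Algorithm \ref{algorithm} to respect eventual periodicity and on checking that the shift in $T_{\pre}^{(k)}(t+1)$ does not destroy the property. Everything after that---the ordering, the partitioning, and the collapse of $\zeta(\Omega)$ to a finite sum---is essentially bookkeeping on top of results already established.
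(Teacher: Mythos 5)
Your proposal is correct and follows exactly the chain the paper intends for this corollary (which it states without an explicit proof): Theorem \ref{eventuallyperiodicsolutionstostability} to get $(h,q)$-ETP gramians, Algorithm \ref{algorithm} with periodically extended factorizations to get an $(h,q)$-ETP balanced realization, Lemma \ref{reducedrealizationstableandbalanced} for $G_r$, and Theorem \ref{errorboundgeneralomega} with the observation that an ETP $\Omega$ has only the finitely many distinct diagonal entries already present in $\Omega_{\eper}$. Your care with the non-uniqueness of the SVD and with the one-step shift in $T_{\pre}^{(k)}(t+1)$ addresses precisely the points the paper glosses over.
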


The bound in Corollary \ref{eventuallyperiodicerrorbound} may be further improved by using Theorem \ref{monotonicerrorbound} to compute the bound due to the states truncated over the finite time-horizon, i.e., for $0\le t < h$.

\section{Illustrative Example}\label{examplesection}
In this section, we apply the BT method to a distributed system $G$ with $N=5$ agents interconnected as in Figure \ref{fig:digraphillustration}. The temporal states and the spatial states are of constant dimensions $n_T=6$ and $n_S=3$, respectively.  There are two sets of building blocks for the state-space matrices: one for the odd-numbered subsystems and another for the even-numbered subsystems. All the blocks of the state-space matrices are constants, except for the $A_{00}^{(k)}(t)$ terms which are $(h=0,q=28)$-ETP, i.e., satisfy $A_{00}^{(k)}(t+28\,z){=}A_{00}^{(k)}(t)$, for all $t,z \in \mathbb{N}_0$ and $k\in V$. Namely, $A_{00}^{(k)}(t)=A_{TT}$ for $t=0,\ldots,6$, $A_{00}^{(k)}(t)=\mathcal{M}A_{TT}\mathcal{M}^T$ for $t=7,\ldots,13$, $A_{00}^{(k)}(t)=\mathcal{M}^2 A_{TT}(\mathcal{M}^T)^2$ for $t=14,\ldots,20$, and $A_{00}^{(k)}(t)=\mathcal{M}^3A_{TT}(\mathcal{M}^T)^3$ for $t=21,\ldots,27$,  where $A_{TT}$ and $\mathcal{M}$ are building blocks. For odd-numbered subsystems,
\begin{align*}
&A_{TT}{=}0.1\!\!\left[\!\!\begin{array}{cc}
\begin{bmatrix}
 -9   & -7 \\
 -5   & -5
\end{bmatrix} & \,\,
\begin{bmatrix}
0.1  &  0.3   & -0.1   &  \,\,\,\,\,0.2\\
0.3  &  0.2   & \,\,\,\,\, 0.1   & -0.2
\end{bmatrix}\\
\rule{0mm}{7mm}
\begin{bmatrix}
\diag(1,-2)\\
-I_2
\end{bmatrix}&\,\,0.01\diag(-5,1,-3,2)
\end{array}\!\!\right]\!\!{,}\,\,A_{TS}{=}0.1\!\!\left[\!\!\!\begin{array}{c}
\begin{bmatrix}
   -0.5  & \,\,\,\,\, 0.5  & \,\,\,\,\,0.01\\
\,\,\,\,\,0.5  & -0.5  & -0.02
\end{bmatrix}\\
0_{4\times3}
\end{array}\!\!\!
\right]\!\!{,} \,\, B_{T}{=}0.2\begin{bmatrix}
I_2\\
0_{4\times 2}
\end{bmatrix}\!\!{,}\\
&A_{ST}=0.1\!\begin{bmatrix}
\diag(1{,}{-}2{,}0.1)&    \diag(0.5{,}0.4{,}0.2)
\end{bmatrix}\!{,}\,\,A_{SS}{=}0_{3 \times 3}{,}\,\,
B_{S}{=}0.1\begin{bmatrix}
I_2\\
0_{1\times2}
\end{bmatrix}, \,\,C_T=\begin{bmatrix}
I_2 & 0_{2\times 4}
\end{bmatrix},\,\,C_S=0_{2 \times 3}.
\end{align*}
Subscripts $T$  and $S$ refer to temporal and spatial terms, respectively. E.g., for $k=1$ and all $t \in \mathbb{N}_0$,
\begin{align*}
&A_{02}^{(1)}(t)=A_{03}^{(1)}(t)=A_{04}^{(1)}(t)=A_{TS},\quad B_{0}^{(1)}(t)=B_{T}, \quad C_{0}^{(1)}(t)=C_{T}, \quad B_{2}^{(1)}(t)=B_{3}^{(1)}(t)=B_{5}^{(1)}(t)=B_{S},\\
& C_{2}^{(1)}(t)=C_{3}^{(1)}(t)=C_{4}^{(1)}(t)=C_{S}, \quad \quad A_{20}^{(1)}(t)=A_{30}^{(1)}(t)=A_{50}^{(1)}(t)=A_{ST}, \quad \quad \mbox{etc.}
\end{align*}
$\mathcal{M}$ is given by $\mathcal{M}=\begin{bmatrix}
0 &0 &0 &0 &1 &0\\
0 &0 &1 &0 &0 &0\\
1 &0 &0 &0 &0 &0\\
0 &0 &0 &0 &0 &1\\
0 &0 &0 &1 &0 &0\\
0 &1 &0 &0 &0 &0\end{bmatrix}$. As for even-numbered subsystems, $A_{SS}=0_{3 \times 3}$, $B_S{=}0_{3 \times 2}$,
\begin{align*}
&A_{TS}=0.1\!\!\begin{bmatrix}
\diag({-}0.5,0.1,{-}0.1)\\
\diag({-}0.5,{-}0.2,0.3)
\end{bmatrix}\!\!{,} \,\,\, C_S=\begin{bmatrix}
{-}I_2 & 0_{2 \times 1}
\end{bmatrix}\!\!{,} \,\,\, A_{ST}=\!\begin{bmatrix}
0.2I_3 &\!\!{-}0.03I_3
\end{bmatrix}\!{,} \,\,\, C_T=\begin{bmatrix}
0& 1 & 0 & 0 & 0 & 0\\
0& 0 & 0 & 1 & 0 & 0
\end{bmatrix}\!{,}\\
&B_{T}=0.1\!\begin{bmatrix}
1 & 0\\
0 & 0\\
0 & 1\\
0 & 0\\
0 & 0\\
0 & 0
\end{bmatrix}\!{,} \quad A_{TT}=0.1\!\!\begin{bmatrix}
  \,\,\,\,\,  1   &   -4             & -0.3             & \,\,\,\,\, 0.1  &  \,\,\,\,\, 0.5  & \,\,\,\,\, 0.3\\
  \,\,\,\,\,  3   &   -5             &  \,\,\,\,\, 0.2  & \,\,\,\,\, 0.1  & -0.2             & \,\,\,\,\, 0.3\\
  \,\,\,\,\,  0.1 &   -0.3           & -0.5             & \,\,\,\,\, 0.2  &  \,\,\,\,\, 0.1  & \,\,\,\,\, 0.1\\
   -0.2           &   \,\,\,\,\, 0   &  \,\,\,\,\, 0    & -0.1            &  \,\,\,\,\, 0    & \,\,\,\,\, 0  \\
  \,\,\,\,\,  0   &   \,\,\,\,\, 0.1 &  \,\,\,\,\, 0    & \,\,\,\,\, 0    &  \,\,\,\,\, 0.15 & \,\,\,\,\, 0  \\
  \,\,\,\,\,  0   &   \,\,\,\,\, 0   &  \,\,\,\,\, 0.3  & \,\,\,\,\, 0    &  \,\,\,\,\, 0    &-0.1
\end{bmatrix}\!\!{,} \quad \mathcal{M}=\!\begin{bmatrix}
 0 &0 &0 &1 &0  &0\\
 0 &0 &0 &0 &-1 &0\\
 0 &1 &0 &0 &0  &0\\
-1 &0 &0 &0 &0  &0\\
 0 &0 &0 &0 &0  &1\\
 0 &0 &1 &0 &0  &0
 \end{bmatrix}\!{.}
\end{align*}

For all $k\in V$ and $t\in \mathbb{N}_0$, $D^{(k)}(t)=0_{2\times2}$.
Using Lemma \ref{performancelemma}, we show that system $G$ is strongly stable and find an upper bound $\gamma$ on $\|G\|$. Namely, we find $(0,28)$-ETP solutions to (\ref{eq:performanceinequality}) while minimizing $\gamma$. We denote the resulting semi-definite programming (SDP) problem by P$1$. The result, $\gamma\!=\!3.47$,  helps in assessing the upper bound on $\|(G-G_r)\|$ and in choosing how many temporal and spatial state variables to truncate.  Then, we find $(0,28)$-ETP solutions to (\ref{eq:lyapinequ1}) which minimize $\sum_{t=0}^{27} (\sum_{k=1}^{5}\tr \, X^{(k)}(t)+ \sum_{(i,j)\in E} \tr \, X^{(ij)}(t))$,  and $(0,28)$-ETP solutions $Y^{(k)}(t)$, $Y^{(ij)}(t)$ to (\ref{eq:lyapinequ2}) which minimize a similar objective function. We denote the resulting SDP problems by P$2$ and P$3$, respectively. We determine the computational complexity of these problems by formulating the corresponding dual problems. The results are summarized in the table below. Let $N_I=12$ be the number of interconnections, and $n_u=2$ be the number of inputs to each subsystem.
\begin{align*}
\begin{array}{|c|c|c|}
\hline
                      & \mbox{P$1$}                                    & \mbox{P$2$ and P$3$}\\
\hline
\mbox{dimension of}   &   (h+q)(N(2 n_T+n_u)+2N_I n_S)                 & 2(h+q)(Nn_T+N_In_S)\\
\mbox{SDP variable}   &     =3976                                      & =3696 \\
\hline
\mbox{dimension of}   &      1                                         & 0\\
\mbox{linear variable}&                                                & \\
\hline
\mbox{number of}      &    (h+q)(2N +N_I)                              & (h+q)(2N +N_I) \\
\mbox{SDP blocks}     &         =616                                   & =616\\
\hline
\mbox{number of}      & \rule{0mm}{4.5mm}\frac{1}{2}(h+q)(Nn_T(n_T+1) +N_In_S(n_S+1))+1 & \frac{1}{2}(h+q)(Nn_T(n_T+1) +N_In_S(n_S+1))\\
\mbox{constraints}    &       =4957                                    & =4956\\
\hline
\end{array}
\end{align*}

We use Yalmip  to model these problems and SDPT3 to solve them; see \begin{NoHyper}{\cite{yalmip,sdpt3}}\end{NoHyper}. We carry out the computations in Matlab $7.10.0.499$ (The MathWorks Inc., Natick, Massachusetts, USA) on a Hewlett-Packard laptop with $2$ Intel Cores, $2.30$ GHz processors, and $4$ GB of RAM running Windows $7$. The most time consuming problem is P$1$. The corresponding elapsed time is about $30$ seconds (CPU time $24$ seconds).

We use Algorithm~\ref{algorithm} to construct a $(0,28)$-ETP balanced realization for $G$. To obtain useful error bounds for BT, we re-solve the Lyapunov inequalities for the balanced realization of $G$. Namely, we find $(0,28)$-ETP diagonal solutions $\Sigma^{(k)}(t)\succeq \epsilon I$ and $\Sigma^{(ij)}(t)\succeq \epsilon I$ that satisfy (\ref{eq:lyapinequ1}) and (\ref{eq:lyapinequ2}), and minimize the objective function
\[
\sum_{t=0}^{27} \left(\sum_{k=1}^5\left\|\vect\left(\Sigma^{(k)}(t)-\epsilon I\right)\right\|_1 +\sum_{(i,j)\in E}\left\|\vect\left(\Sigma^{(ij)}(t)-\epsilon I\right)\right\|_1\right)+a_1\times \epsilon,\]
where $\vect(M)$ is the vector formed by the diagonal entries of the matrix $M$ and $\|v\|_1$ is the $1$-norm of vector $v$.  $a_1=750$ is the weight given to $\epsilon$ in the cost function. This value of $a_1$ gives the best trade-off between the following two objectives. The first objective is to minimize the first term in the cost function, where the  $1$-norm is used as a heuristic for finding diagonal gramians with many  entries equal to $\epsilon$. We intend to truncate all the temporal and spatial state variables whose corresponding entries in the gramians are equal to $\epsilon$. The second objective is to minimize $\epsilon$ since, by Corollary~\ref{eventuallyperiodicerrorbound}, $\|(G-G_r)\|<2 \epsilon$. We get $\epsilon=0.034$, i.e., $\|(G-G_r)\|< 2\% \gamma$. Figure \ref{fig:diagonalentriesplot} shows the first and second diagonal entries of $\Sigma^{(21)}(t)$, for $0\le t <28$. The red dashed line corresponds to $\epsilon$. The dimension of $x^{(21)}$ varies between $0$ and $1$ in $G_r$ in contrast to $3$ in $G$, i.e., in $G_r$, the interconnection $(2,1)$ disappears at certain time-steps from the interconnection structure. As can be seen in Figure \ref{fig:truncatedvariables}, between $14$ and $18$ temporal variables and between $20$ and $25$ spatial variables are truncated at each time-step. Clearly, truncation need not be uniform in time even if the dimensions of the states in the full order system are constants. We simulate $G$ and $G_r$ for the same set of applied inputs and plot their responses in Figure \ref{fig:comparisonofGandGr}. The inputs vary randomly between $-10$ and $10$ for the first $100$ time-steps and then are set equal to zero. As predicted by the error bound, the responses of $G$ and $G_r$ are very close.

\begin{figure}[t]
\centering
\includegraphics[scale=0.4]{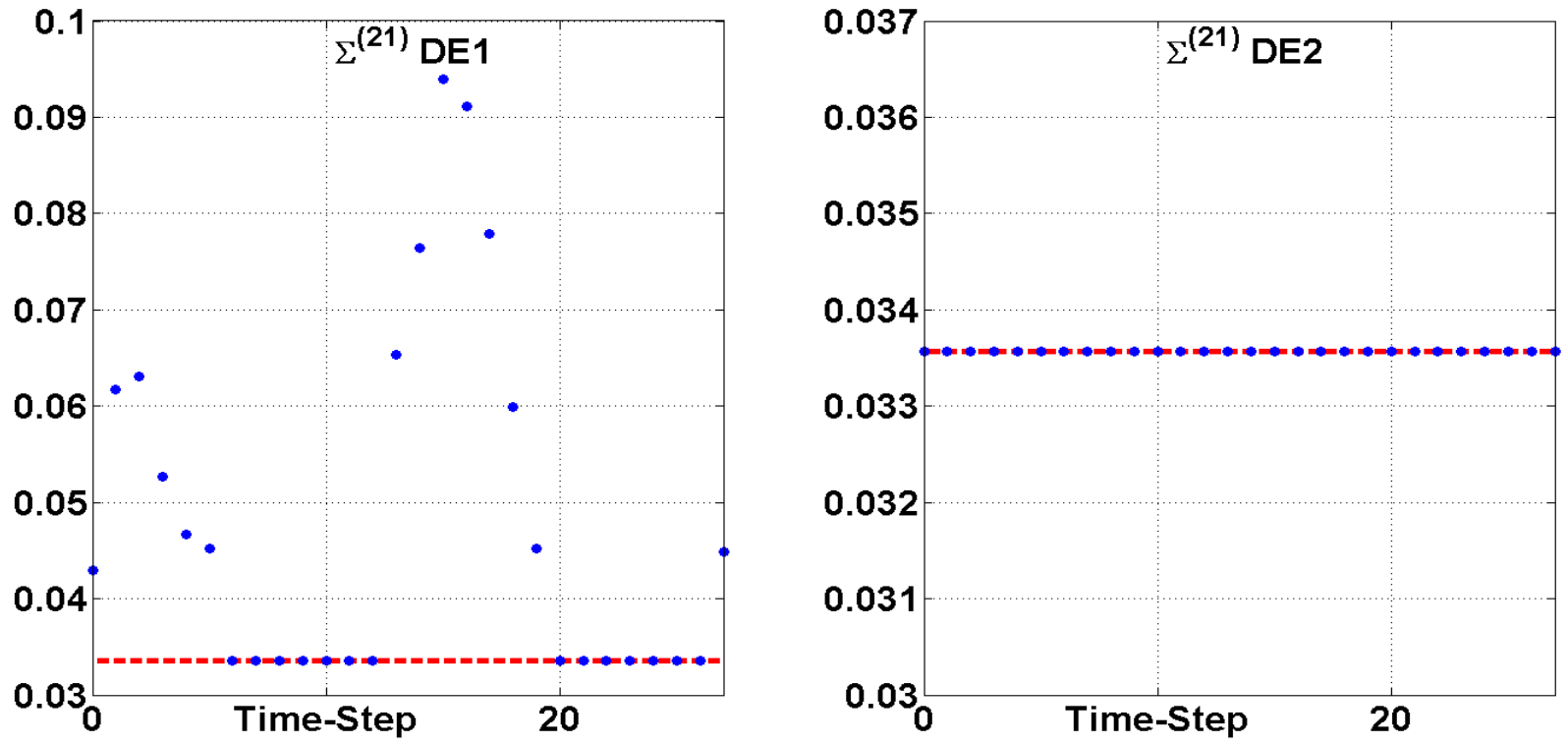}
\vskip -3mm
\caption{First and second diagonal entries (DE) of $\Sigma^{(21)}(t)$.}
\label{fig:diagonalentriesplot}
\end{figure}
\begin{figure}[t]
\centering
\includegraphics[scale=0.25]{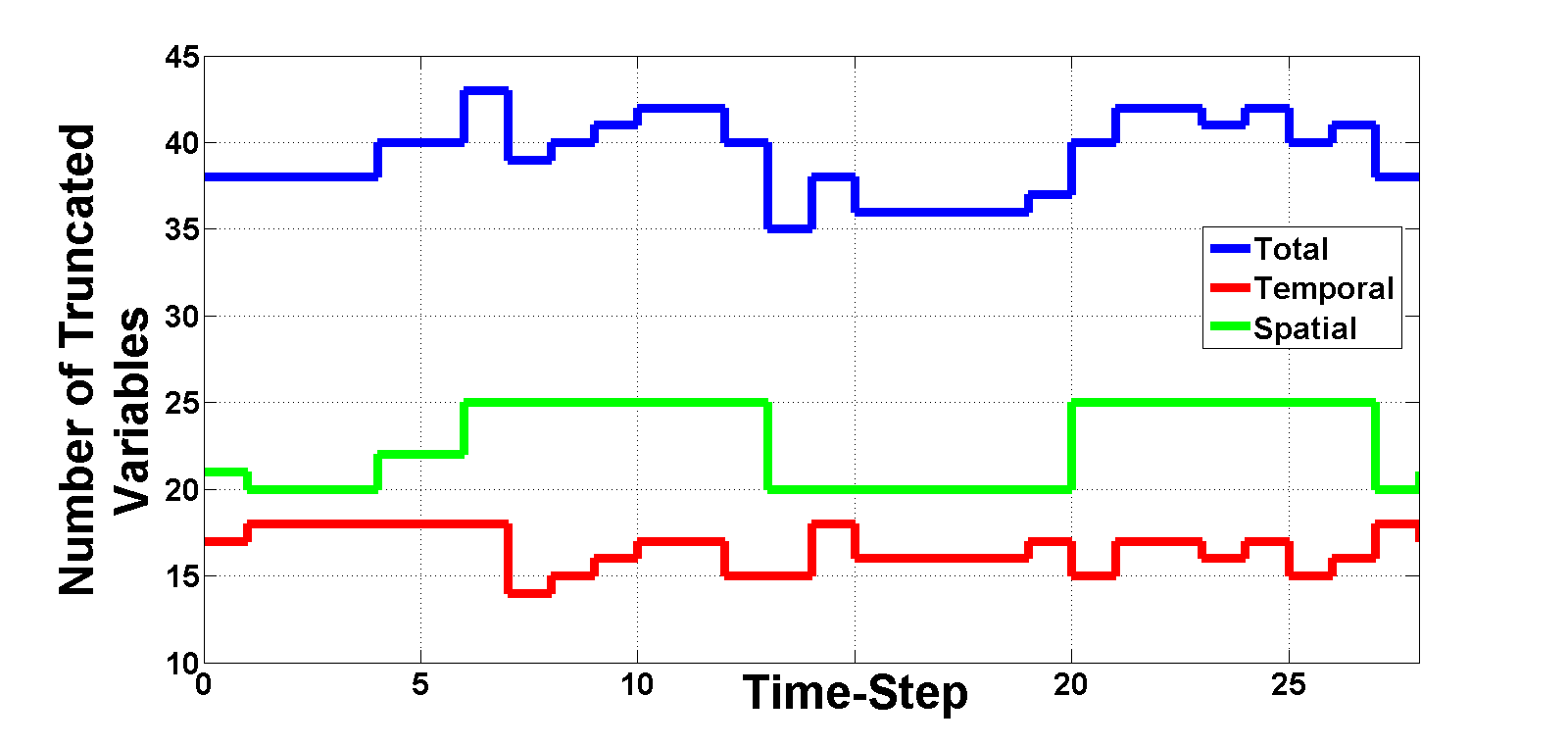}
\vskip -3mm
\caption{Number and type of truncated state variables as function of time.}
\label{fig:truncatedvariables}
\end{figure}
\begin{figure}[t]
\centering
\includegraphics[scale=0.4]{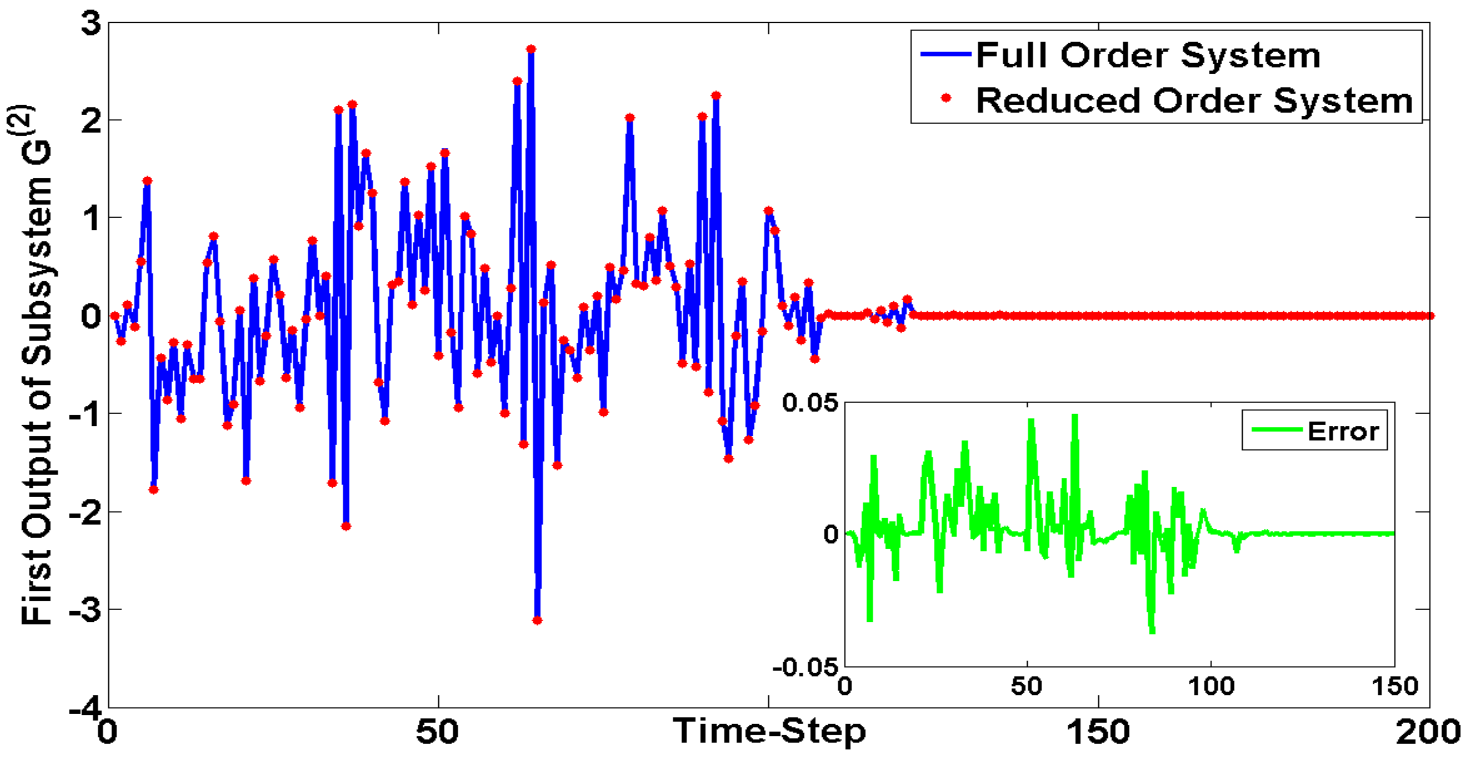}
\vskip -3mm
\caption{Responses of the full order and reduced order systems for the same set of applied inputs.}
\label{fig:comparisonofGandGr}
\end{figure}
\section{Conclusion}\label{conclusionsection}
This work applies BT for the model reduction of distributed LTV systems. The method provides a priori error bounds, preserves the interconnection structure, and allows for its simplification. While BT is only applicable to strongly stable systems, CFR extends the applicability of BT to strongly stabilizable and strongly detectable systems.

\section*{Acknowledgment}
This work is supported by the National Science Foundation under Grant CMMI-1333785.

\bibliographystyle{elsarticle-harv}
\bibliography{AbouJaoudeFarhood_arXiv}

\begin{thebibliography}{19}
\expandafter\ifx\csname natexlab\endcsname\relax\def\natexlab#1{#1}\fi
\providecommand{\url}[1]{\texttt{#1}}
\providecommand{\href}[2]{#2}
\providecommand{\path}[1]{#1}
\providecommand{\DOIprefix}{doi:}
\providecommand{\ArXivprefix}{arXiv:}
\providecommand{\URLprefix}{URL: }
\providecommand{\Pubmedprefix}{pmid:}
\providecommand{\doi}[1]{\href{http://dx.doi.org/#1}{\path{#1}}}
\providecommand{\Pubmed}[1]{\href{pmid:#1}{\path{#1}}}
\providecommand{\bibinfo}[2]{#2}
\ifx\xfnm\relax \def\xfnm[#1]{\unskip,\space#1}\fi
%Type = Inproceedings
\bibitem[{{Abou Jaoude} and Farhood(2015)}]{DAJMFCDC15}
\bibinfo{author}{{Abou Jaoude}, D.}, \bibinfo{author}{Farhood, M.},
  \bibinfo{year}{2015}.
\newblock \bibinfo{title}{Balanced truncation of linear systems interconnected
  over arbitrary graphs with communication latency}, in:
  \bibinfo{booktitle}{Proceedings of the 54th {IEEE} Conference on Decision and
  Control}, \bibinfo{address}{Osaka, Japan}. pp. \bibinfo{pages}{5346--5351}.
%Type = Article
\bibitem[{{Abou Jaoude} and Farhood(2016)}]{aboujaoudefarhood2016CFR}
\bibinfo{author}{{Abou Jaoude}, D.}, \bibinfo{author}{Farhood, M.},
  \bibinfo{year}{2016}.
\newblock \bibinfo{title}{Coprime factors model reduction of spatially
  distributed {LTV} systems over arbitrary graphs}.
\newblock \bibinfo{journal}{{IEEE} Transactions on Automatic Control}
  \DOIprefix\doi{10.1109/TAC.2016.2638039}.
%Type = Article
\bibitem[{Al-Taie and Werner(2016)}]{Altaie2015}
\bibinfo{author}{Al-Taie, F.}, \bibinfo{author}{Werner, H.},
  \bibinfo{year}{2016}.
\newblock \bibinfo{title}{Structure-preserving model reduction for spatially
  interconnected systems with experimental validation on an actuated beam}.
\newblock \bibinfo{journal}{International Journal of Control}
  \bibinfo{volume}{89}, \bibinfo{pages}{1248--1268}.
%Type = Article
\bibitem[{Beck et~al.(1996)Beck, Doyle and Glover}]{beck1996}
\bibinfo{author}{Beck, C.L.}, \bibinfo{author}{Doyle, J.},
  \bibinfo{author}{Glover, K.}, \bibinfo{year}{1996}.
\newblock \bibinfo{title}{{M}odel reduction of multidimensional and uncertain
  systems}.
\newblock \bibinfo{journal}{{IEEE} Transactions on Automatic Control}
  \bibinfo{volume}{41}, \bibinfo{pages}{1466--1477}.
%Type = Article
\bibitem[{Dullerud and D\textsc{\char13}Andrea(2004)}]{dullerud2004}
\bibinfo{author}{Dullerud, G.E.}, \bibinfo{author}{D\textsc{\char13}Andrea,
  R.}, \bibinfo{year}{2004}.
\newblock \bibinfo{title}{Distributed control of heterogeneous systems}.
\newblock \bibinfo{journal}{{IEEE} Transactions on Automatic Control}
  \bibinfo{volume}{49}, \bibinfo{pages}{2113--2128}.
%Type = Article
\bibitem[{Dullerud and Lall(1999)}]{geir1999}
\bibinfo{author}{Dullerud, G.E.}, \bibinfo{author}{Lall, S.},
  \bibinfo{year}{1999}.
\newblock \bibinfo{title}{A new approach for analysis and synthesis of
  time-varying systems}.
\newblock \bibinfo{journal}{{IEEE} Transactions on Automatic Control}
  \bibinfo{volume}{44}, \bibinfo{pages}{1486–--1497}.
%Type = Article
\bibitem[{Farhood(2012)}]{farhood2012}
\bibinfo{author}{Farhood, M.}, \bibinfo{year}{2012}.
\newblock \bibinfo{title}{{LPV} control of nonstationary systems: A
  parameter-dependent {L}yapunov approach}.
\newblock \bibinfo{journal}{{IEEE} Transactions on Automatic Control}
  \bibinfo{volume}{57}, \bibinfo{pages}{212–--218}.
%Type = Article
\bibitem[{Farhood and Beck(2014)}]{beck2014}
\bibinfo{author}{Farhood, M.}, \bibinfo{author}{Beck, C.L.},
  \bibinfo{year}{2014}.
\newblock \bibinfo{title}{On the balanced truncation and coprime factors
  reduction of {M}arkovian jump linear systems}.
\newblock \bibinfo{journal}{Systems and Control Letters} \bibinfo{volume}{64},
  \bibinfo{pages}{96--106}.
%Type = Article
\bibitem[{Farhood et~al.(2015)Farhood, Di and Dullerud}]{farhood2015}
\bibinfo{author}{Farhood, M.}, \bibinfo{author}{Di, Z.},
  \bibinfo{author}{Dullerud, G.E.}, \bibinfo{year}{2015}.
\newblock \bibinfo{title}{Distributed control of linear time-varying systems
  interconnected over arbitrary graphs}.
\newblock \bibinfo{journal}{International Journal of Robust and Nonlinear
  Control} \bibinfo{volume}{25}, \bibinfo{pages}{179--206}.
%Type = Article
\bibitem[{Farhood and Dullerud(2007)}]{farhood2007}
\bibinfo{author}{Farhood, M.}, \bibinfo{author}{Dullerud, G.E.},
  \bibinfo{year}{2007}.
\newblock \bibinfo{title}{Model reduction of nonstationary {LPV} systems}.
\newblock \bibinfo{journal}{{IEEE} Transactions on Automatic Control}
  \bibinfo{volume}{52}, \bibinfo{pages}{181--196}.
%Type = Article
\bibitem[{Hinrichsen and Pritchard(1990)}]{hinrichsen1990}
\bibinfo{author}{Hinrichsen, D.}, \bibinfo{author}{Pritchard, A.J.},
  \bibinfo{year}{1990}.
\newblock \bibinfo{title}{{A}n improved error estimate for reduced-order models
  of discrete-time systems}.
\newblock \bibinfo{journal}{{IEEE} Transactions on Automatic Control}
  \bibinfo{volume}{35}, \bibinfo{pages}{317--320}.
%Type = Book
\bibitem[{Kubrusly(2011)}]{kubrusly2011}
\bibinfo{author}{Kubrusly, C.S.}, \bibinfo{year}{2011}.
\newblock \bibinfo{title}{The elements of operator theory}.
\newblock \bibinfo{edition}{Second} ed., \bibinfo{publisher}{Birkhauser},
  \bibinfo{address}{Basel}.
%Type = Article
\bibitem[{Li and Paganini(2005)}]{li2005}
\bibinfo{author}{Li, L.}, \bibinfo{author}{Paganini, F.}, \bibinfo{year}{2005}.
\newblock \bibinfo{title}{Structured coprime factor model reduction based on
  {LMI}s}.
\newblock \bibinfo{journal}{{A}utomatica} \bibinfo{volume}{41},
  \bibinfo{pages}{145--151}.
%Type = Inproceedings
\bibitem[{Lofberg(2004)}]{yalmip}
\bibinfo{author}{Lofberg, J.}, \bibinfo{year}{2004}.
\newblock \bibinfo{title}{{YALMIP}: A toolbox for modeling and optimization in
  {MATLAB}}, in: \bibinfo{booktitle}{Proceedings of the {CACSD} Conference},
  \bibinfo{address}{Taipei, Taiwan}. pp. \bibinfo{pages}{284--289}.
%Type = Article
\bibitem[{Sandberg and Murray(2009)}]{murray2009}
\bibinfo{author}{Sandberg, H.}, \bibinfo{author}{Murray, R.M.},
  \bibinfo{year}{2009}.
\newblock \bibinfo{title}{Model reduction of interconnected linear systems}.
\newblock \bibinfo{journal}{Optimal Control Applications and Methods}
  \bibinfo{volume}{30}, \bibinfo{pages}{225–--245}.
%Type = Article
\bibitem[{Sandberg and Rantzer(2004)}]{rantzer2004}
\bibinfo{author}{Sandberg, H.}, \bibinfo{author}{Rantzer, A.},
  \bibinfo{year}{2004}.
\newblock \bibinfo{title}{{B}alanced truncation of linear time-varying
  systems}.
\newblock \bibinfo{journal}{{IEEE} Transactions on Automatic Control}
  \bibinfo{volume}{49}, \bibinfo{pages}{217--229}.
%Type = Inproceedings
\bibitem[{Sootla and Anderson(2016)}]{sootla2016}
\bibinfo{author}{Sootla, A.}, \bibinfo{author}{Anderson, J.},
  \bibinfo{year}{2016}.
\newblock \bibinfo{title}{On existence of solutions to structured {L}yapunov
  inequalities}, in: \bibinfo{booktitle}{Proceedings of the American Control
  Conference}, \bibinfo{address}{Boston, MA}. pp. \bibinfo{pages}{7013--7018}.
%Type = Article
\bibitem[{Toh et~al.(1999)Toh, Todd and Tutuncu}]{sdpt3}
\bibinfo{author}{Toh, K.C.}, \bibinfo{author}{Todd, M.J.},
  \bibinfo{author}{Tutuncu, R.H.}, \bibinfo{year}{1999}.
\newblock \bibinfo{title}{{SDPT3} --- a {Matlab} software package for
  semidefinite programming}.
\newblock \bibinfo{journal}{Optimization Methods and Software}
  \bibinfo{volume}{11}, \bibinfo{pages}{545--581}.
%Type = Article
\bibitem[{Trnka et~al.(2013)Trnka, Sturk, Sandberg, Havlena and
  Rehor}]{Trnka2013}
\bibinfo{author}{Trnka, P.}, \bibinfo{author}{Sturk, C.},
  \bibinfo{author}{Sandberg, H.}, \bibinfo{author}{Havlena, V.},
  \bibinfo{author}{Rehor, J.}, \bibinfo{year}{2013}.
\newblock \bibinfo{title}{Structured model order reduction of parallel models
  in feedback}.
\newblock \bibinfo{journal}{{IEEE} Transactions on Control Systems Technology}
  \bibinfo{volume}{21}, \bibinfo{pages}{739--752}.

\end{thebibliography}

\end{document}